\newtheorem{thm}{Theorem}[section]
\newtheorem{lemma}[thm]{Lemma}
\newtheorem*{lemma*}{Lemma}
\newtheorem{defi}[thm]{Definition}
\newtheorem{conj}[thm]{Conjecture}
\newtheorem{claim}[thm]{Claim}
\theoremstyle{remark}
\newtheorem*{remark}{Remark}
\title{Properties of minimally $t$-tough graphs} 
\author[1]{Gyula Y. Katona\thanks{kiskat@cs.bme.hu}}
\author[2]{Dániel Soltész\thanks{solteszd@math.bme.hu}} 
\author[3]{Kitti Varga\thanks{vkitti@cs.bme.hu}}
\affil[1,2,3]{Department of Computer Science and
Information Theory,
  Budapest University of Technology and Economics}
	\affil[1]{MTA-ELTE Numerical Analysis and Large Networks Research Group}
\date{\today}
\begin{document}
\maketitle

\begin{abstract}
A graph $G$ is minimally $t$-tough if the toughness of $G$ is $t$ and the deletion of any edge from $G$ decreases the toughness. Kriesell conjectured that for every minimally $1$-tough graph the minimum degree $\delta(G)=2$. We show that in every minimally $1$-tough graph $\delta(G) \le \frac{n}{3} + 1$. We also prove that every minimally $1$-tough, claw-free graph is a cycle. On the other hand, we show that for every positive rational number $t$ any graph can be embedded as an induced subgraph into a minimally $t$-tough graph. 
\end{abstract}
\section{Introduction}

All graphs considered in this paper are finite, simple and undirected. Let $d(v)$ denote the degree of a vertex $v$, $\omega(G)$ denote the number of components, $\alpha(G)$ denote the independence number and $\delta(G)$ denote the minimum degree of a graph $G$.

\begin{defi}
 A graph $G$ is $k$-connected, if it has at least $k+1$ vertices and remains connected whenever fewer than $k$ vertices are removed. The connectivity of $G$, denoted by $\kappa(G)$, is the largest k for which $G$ is $k$-connected.
\end{defi}

The more edges a graph has, the larger its connectivity can be, so the graphs, which are $k$-connected and have the fewest edges for this property, may be interesting.

\begin{defi}
  A graph $G$ is minimally $k$-connected, if $\kappa(G) = k$ and ${\kappa(G - e)} < k$ for all $e \in E(G)$.
\end{defi}

Clearly, all degrees of a $k$-connected graph have to be at least $k$. On the other hand, Mader proved that the minimum degree of every minimally $k$-connected graph is exactly $k$.

\begin{thm}[Mader \cite{ende}]
 Every minimally $k$-connected graph has a vertex of degree $k$.
 \label{mader}
\end{thm}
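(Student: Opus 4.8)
The plan is to prove Mader's theorem by contradiction, organised around a smallest \emph{fragment}. For a vertex set $X\subseteq V(G)$ write $N(X)$ for the set of vertices outside $X$ having a neighbour in $X$, and put $\overline{X}=V(G)\setminus(X\cup N(X))$. Call $X$ a \emph{fragment} if $X\neq\emptyset$, $\overline{X}\neq\emptyset$ and $|N(X)|=k$; these are exactly the ``sides'' of $k$-element cuts, and by $k$-connectivity every set $X$ with $X\neq\emptyset\neq\overline{X}$ satisfies $|N(X)|\ge k$. First I would dispose of the trivial cases: if $G$ is complete then $\kappa(G)=|V(G)|-1=k$ forces $G=K_{k+1}$, all of whose degrees equal $k$; otherwise $\kappa(G)=k$ is witnessed by genuine $k$-cuts, so fragments exist. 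Assume from now on, for contradiction, that $\delta(G)\ge k+1$, and fix a fragment $F$ of minimum cardinality, writing $S:=N(F)$.

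The engine is a local observation coming from minimality. Every edge $xy$ is critical, so $G-xy$ has a separating set $T$ with $|T|=k-1$; since $G$ is $k$-connected, $G-T$ stays connected, so $T$ must in fact separate $x$ from $y$ in $G-xy$. If the component of $x$ in $G-xy-T$ were the single vertex $x$, then $N_G(x)\subseteq T\cup\{y\}$, giving $d(x)\le k$ and hence $d(x)=k$, which would already prove the theorem. So I may assume that for every edge both endpoints lie in nontrivial components; in that case $T\cup\{y\}$ is a $k$-cut and the component $A\ni x$ of $G-(T\cup\{y\})$ is again a fragment (nontriviality of the $y$-side guarantees $\overline{A}\neq\emptyset$).

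Now I would exploit the minimality of $F$. If $|F|=1$, its unique vertex $v$ satisfies $d(v)=|N(v)|=|S|=k$ and we are done; so suppose $|F|\ge 2$. Being a component of $G-S$, the set $F$ is connected and hence contains an edge $xy$. Applying the observation to $xy$ yields a fragment $A$ with $x\in A$ and $y\notin A$, so $F\cap A$ is a nonempty proper subset of $F$. The key tool is the (unconditional) submodularity of the vertex-boundary function, $|N(X)|+|N(Y)|\ge|N(X\cap Y)|+|N(X\cup Y)|$, which one checks vertex by vertex. Feeding in $X=F$, $Y=A$ and using $|N(F)|=|N(A)|=k$ together with $|N(F\cup A)|\ge k$ gives $|N(F\cap A)|\le k$, while $k$-connectivity forces $|N(F\cap A)|\ge k$ (one verifies $\overline{F}\subseteq\overline{F\cap A}$, so $\overline{F\cap A}\neq\emptyset$). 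Hence $F\cap A$ is a fragment with $|F\cap A|<|F|$, contradicting minimality. Therefore the minimum fragment is a single vertex, which has degree $k$.

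The one delicate point — and the main obstacle — is justifying $|N(F\cup A)|\ge k$, i.e.\ that $\overline{F\cup A}\neq\emptyset$. A short computation shows $\overline{F\cup A}=\overline{F}\cap\overline{A}$, so the bound holds precisely when $\overline{F}\cap\overline{A}\neq\emptyset$, and this can fail if $A$ spills over far enough that $F\cup A$ and its boundary exhaust $V(G)$. The standard remedy is to argue symmetrically: $\overline{A}$ is itself a fragment (since $N(\overline{A})\subseteq N(A)$ forces $|N(\overline{A})|=k$), and of the two ``corners'' contained in $F$, namely $F\cap A$ and $F\cap\overline{A}$, one is always available as a strictly smaller fragment. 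Choosing at the outset the endpoint of $xy$ and the side of the cut $T\cup\{y\}$ so that the relevant corner is nonempty and properly inside $F$, and then checking the $k$-connectivity bounds $|N(\cdot)|\ge k$ for each corner, is where the real work lies.
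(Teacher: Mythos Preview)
The paper does not prove this theorem. It is stated as Theorem~1.2 with a citation to Mader's original paper~\cite{ende} and serves only as motivation for Kriesell's conjecture; no proof is given or even sketched. So there is nothing in the paper to compare your proposal against.

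On its own merits, your plan follows the standard fragment (or ``atom'') approach due to Mader, and the outline is correct. The one place where your write-up is not yet a proof is exactly the point you flag yourself: the submodularity step needs $\overline{F\cup A}=\overline{F}\cap\overline{A}\neq\emptyset$, and when this fails you have to switch to the other corner $F\cap\overline{A}$ and check that \emph{it} is a fragment strictly contained in $F$. Carrying this out cleanly requires the usual $3\times 3$ grid picture for $(F,S,\overline{F})$ against $(A,N(A),\overline{A})$ and a short counting argument (if $\overline{F}\cap\overline{A}=\emptyset$ then, since $|N(\overline{A})|=k=|S|$, one compares column sums to force $|F\cap\overline{A}|\ge|\overline{F}\cap A|$ etc.). You also need to observe that $y\in F\cap N(A)$, so both $F\cap A$ and $F\cap\overline{A}$, when nonempty, are proper subsets of $F$. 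Until that case split is written out, the proposal is a correct plan rather than a complete proof.
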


The notion of toughness was introduced by Chv\'{a}tal \cite{chvatal} in 1973.

\begin{defi}
  Let $t$ be a positive real number. A graph $G$ is called $t$-tough,
  if ${\omega(G - S) \le |S|/t}$ for any cutset $S$ of $G$. 
  The toughness of $G$, denoted by $\tau(G)$, is the largest $t$ for
  which G is $t$-tough, taking $\tau(K_n) = \infty$ for all $n \ge 1$.
  
  We say that a cutset $S \subseteq V(G)$ is a tough set if $\omega(G - S) = |S|/\tau(G)$.
\end{defi}


We can define an analogue of minimally $k$-connected graphs for the notion of toughness.

\begin{defi}
  A graph $G$ is said to be minimally $t$-tough, if $\tau(G) = t$ and
  $\tau(G - e) < t$ for all $e \in E(G)$.
\end{defi}

It follows directly from the definition that every $t$-tough graph is $2t$-connected, implying $\kappa (G) \ge 2 \tau (G)$ for noncomplete graphs. Therefore, the minimum degree of any 1-tough graph is at least 2. Kriesell conjectured that the analogue of Mader's theorem holds for minimally $1$-tough graphs. 

\begin{conj}[Kriesell \cite{kriesell}]     
 \label{kriesell}
 Every minimally $1$-tough graph has a vertex of degree $2$.
\end{conj}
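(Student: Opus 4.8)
The plan is to attempt the conjecture by contradiction, assuming $G$ is minimally $1$-tough with $\delta(G)\ge 3$ and deriving a contradiction by adapting the extremal/fragment analysis that underlies Mader's Theorem~\ref{mader}. The natural first step is to attach to every edge a canonical witness. Fix $e=uv$. Since $\tau(G-e)<1$, there is a set $S_e\subseteq V(G)$ with $\omega((G-e)-S_e)>|S_e|$; because $G$ itself is $1$-tough we have $\omega(G-S_e)\le|S_e|$, and reinserting a single edge merges at most two components, so $\omega(G-S_e)\ge\omega((G-e)-S_e)-1$. Combining these forces
\[
 \omega(G-S_e)=|S_e|=\omega\bigl((G-e)-S_e\bigr)-1 ,
\]
so that $S_e$ is itself a tough set of $G$, both endpoints $u,v$ lie outside $S_e$ in distinct components, and $e$ is the \emph{unique} edge of $G-S_e$ joining the component of $u$ to that of $v$. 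This is the toughness counterpart of the size-$(\kappa-1)$ separator produced for each critical edge in the connectivity setting.

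The second step is to exploit this structure globally. Among all tough sets of $G$ I would choose one, say $S$ with components $C_1,\dots,C_c$ (where $c=|S|$), that is extremal, minimizing $|S|$ and then maximizing the number of singleton components. Since $\tau(G)=1$ implies $\kappa(G)\ge 2$, each component $C_i$ must be adjacent to at least two vertices of $S$, so the bipartite ``attachment'' graph between $S$ and $\{C_1,\dots,C_c\}$ has minimum degree $\ge 2$ on the component side. The target is a singleton component $\{x\}$ whose neighbourhood lies entirely in $S$ and meets $S$ in exactly two vertices, which would give $d(x)=2$ and finish the proof. Under the hypothesis $\delta(G)\ge 3$ every such singleton is instead attached to at least three vertices of $S$, and the goal becomes a contradiction: one would like to use a witness edge $e$ leaving a minimal fragment to \emph{uncross} $S$ with $S_e$ and descend to a strictly smaller tough set or a smaller fragment, contradicting the extremal choice of $S$.

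The hard part, and the reason the conjecture remains open while only partial bounds such as $\delta(G)\le \tfrac{n}{3}+1$ are obtained, is precisely this uncrossing step. In Mader's proof every critical edge yields a separator of size exactly $\kappa-1$, and two such separators can be combined using the submodularity of the separation function to trap a vertex of degree $\kappa$. For toughness the witnesses carry no uniform size, the cardinalities $|S_e|$ may vary widely, and the map $S\mapsto\omega(G-S)$ is not submodular, so two tough sets cannot in general be uncrossed into a third. Consequently the interaction between the witness components of the various edges incident to a single low-degree vertex is difficult to control, and excluding the configuration in which every small fragment is attached to at least three separator vertices is exactly the obstacle one cannot yet surmount in full generality; this is what motivates proving the weaker degree bound and settling the claw-free case first, and then attempting to extend the argument by ruling out the offending local structures one family at a time.
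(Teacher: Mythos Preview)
The statement you are addressing is Conjecture~\ref{kriesell}, and the paper does \emph{not} prove it; it is presented as an open problem. There is therefore no ``paper's own proof'' to compare against. What the paper actually establishes are the weaker Theorem~\ref{fotetel} (a vertex of degree at most $n/3+1$) and the claw-free characterization Theorem~\ref{clawfree-sajat}.

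Your write-up is not a proof either, and to your credit you say so explicitly. The first step---attaching to every edge $e$ a witness set $S_e$ with $\omega(G-S_e)=|S_e|$ and $\omega((G-e)-S_e)=|S_e|+1$---is exactly Claim~\ref{min1toughlemma} of the paper and is the starting point for all of its arguments. From there, however, your plan diverges into an extremal/uncrossing scheme modelled on Mader's proof, and you yourself identify the obstruction: the witness sets $S_e$ have no uniform cardinality and $S\mapsto\omega(G-S)$ is not submodular, so two tough sets cannot be uncrossed to produce a smaller one. That diagnosis is accurate, and it is precisely why the paper does not attempt this route for the full conjecture. Instead the paper uses the witnesses $S_e$ quantitatively (bounding $k(e)=|S_e|$ via Lemma~\ref{lemma1}, then a counting argument on $e(A,B)$ in Lemma~\ref{work}) to obtain the $n/3+1$ bound, and uses the structural Lemma~\ref{clawfreelemma} to dispose of the claw-free case.

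In short: there is a genuine gap, namely the uncrossing step you flag, and you have not proposed a way around it. What you have written is a reasonable explanation of why the Mader template stalls, but it is not a proof of the conjecture, and the paper makes no claim to have one.
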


A 1-tough graph is always 2-connected, however, a minimally 1-tough graph is not necessarily minimally 2-con\-nected (see Figure~\ref{min2_petersen}), so Mader's theorem cannot be applied.

\begin{figure}[H]
\centering
\begin{tikzpicture}
 \tikzstyle{vertex}=[draw,circle,fill=black,minimum size=4,inner sep=0]
 
 \node[vertex] (a1) at (90:1.5) {};
 \node[vertex] (a2) at (90+72:1.5) {};
 \node[vertex] (a3) at (90+2*72:1.5) {};
 \node[vertex] (a4) at (90+3*72:1.5) {};
 \node[vertex] (a5) at (90+4*72:1.5) {};
 
 \node[vertex] (b1) at (90:0.75) {};
 \node[vertex] (b2) at (90+72:0.75) {};
 \node[vertex] (b3) at (90+2*72:0.75) {};
 \node[vertex] (b4) at (90+3*72:0.75) {};
 \node[vertex] (b5) at (90+4*72:0.75) {};
 
 \draw (a1) -- (a2);
 \draw (a2) -- (a3);
 \draw (a3) -- (a4);
 \draw (a4) -- (a5);
 \draw (a5) -- (a1);
 
 \draw (b1) -- (b3);
 \draw (b2) -- (b4);
 \draw (b3) -- (b5);
 \draw (b4) -- (b1);
 \draw (b5) -- (b2);
 
 \draw (a1) -- (b1) node[pos=0.5, right] {\footnotesize $\hspace{-2pt} e$};
 \draw (a2) -- (b2);
 \draw (a5) -- (b5);
 
 \node at (-1.5,1.5) {$G$};
\end{tikzpicture}
\caption{A minimally 1-tough but not minimally 2-connected graph. The graph $G-e$ is still $2$-connected.}
\label{min2_petersen}
\end{figure}
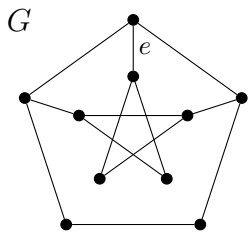

A natural approach to Kriesell's conjecture is to prove upper bounds on $\delta(G)$ for minimally $1$-tough graphs. Kriesell's conjecture states that ${\delta(G) \le 2}$, and the best known upper bound follows easily from Dirac's theorem, yielding $\delta(G) \le n/2$. Our main result is an improvement on the current upper bound by a constant factor.


\begin{thm}
 \label{fotetel}
 Every minimally 1-tough graph has a vertex of degree at most $\frac{n}{3} + 1$.
\end{thm}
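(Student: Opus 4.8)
The plan is to proceed by contradiction. Suppose $G$ is minimally $1$-tough on $n$ vertices with $\delta(G) > \frac{n}{3} + 1$, so every vertex has degree at least $\frac{n}{3} + 2$. The key leverage comes from the minimality hypothesis: for every edge $e = uv$, the graph $G - e$ has toughness strictly less than $1$, which means $G - e$ has a cutset $S_e$ with $\omega((G-e) - S_e) > |S_e|$, i.e. $\omega((G-e) - S_e) \ge |S_e| + 1$. Meanwhile, $G$ itself is $1$-tough, so in $G$ the same set $S_e$ satisfies $\omega(G - S_e) \le |S_e|$. The first step is to understand the structure of these edge-certifying cutsets: deleting the single edge $e$ must increase the number of components by at least one, which forces $e = uv$ to be a bridge between two otherwise-separated pieces of $G - S_e$. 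Concretely, I would argue that $u$ and $v$ lie in different components of $(G-e) - S_e$ while lying in the same component of $G - S_e$, and that $\omega(G - S_e) = |S_e|$ exactly (so $S_e$ is a tough set of $G$ through which $e$ passes).

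\medskip

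Second, I would try to extract a quantitative degree bound from such a tough set. Since $G$ is $1$-tough and $S = S_e$ is a tough set with $\omega(G - S) = |S| =: k$, the graph $G - S$ splits into $k$ components $C_1, \dots, C_k$. The endpoints $u \in C_i$ and $v \in C_j$ (with $i \ne j$ after removing $e$) have all of their neighbors confined to $S \cup C_i$ and $S \cup C_j$ respectively. The natural idea is to count: the average component size is roughly $\frac{n - k}{k}$, and if some vertex had very high degree but lived in a small component, its neighbors would have to pile into $S$, bounding $k$ from below and hence $|S|$ from below. I would look for a vertex — perhaps the one certifying the smallest component, or a vertex in a component meeting few others — whose neighborhood is so constrained that $d(v) \le |S \cup C_i| - 1$ gives the desired $\frac{n}{3} + 1$ bound once the sizes are balanced against each other.

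\medskip

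The cleanest route, I expect, is to combine the tough-set structure with a careful choice of which edge to delete and which vertex to examine, so that the three quantities $|S|$, the size of the component containing the examined vertex, and the total count $n = |S| + \sum |C_i|$ interact to produce the factor $\tfrac13$. Roughly: if $v$ lies in a component $C$ of size $c$, then $d(v) \le |S| + c - 1$; to make this at most $\frac{n}{3} + 1$ we want $|S| + c$ small relative to $n = |S| + \sum_{i} |C_i| \ge |S| + c + (k-1)\cdot 1$. The presence of $k = |S|$ components means the remaining $k - 1$ components contribute at least $k - 1$ vertices, and I would push this accounting — possibly by averaging the degree bound over a well-chosen vertex in the smallest or a suitably sized component — to force $|S| + c \le \frac{n}{3} + 2$.

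\medskip

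The main obstacle I anticipate is that a single tough set need not directly bound the degree of a \emph{specific} vertex: knowing $\omega(G - S) = |S|$ tells us the components are numerous but says little about how the neighborhood of any one high-degree vertex is distributed. The delicate part will be showing that either some component is small (letting a vertex in it have bounded degree because its neighbors are trapped in $S$ plus that small component), or that the tough set $S$ is itself small (so that a vertex whose neighbors lie in $S$ plus its own component is still bounded). I would therefore expect the heart of the argument to be a dichotomy on the size of the components of $G - S$, together with a clever selection of the edge $e$ to delete so that its endpoints are forced into a configuration where this dichotomy yields a vertex of degree at most $\frac{n}{3} + 1$.
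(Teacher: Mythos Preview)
Your setup is correct and matches the paper's Claim~2.1 and the beginning of Lemma~2.3: for each edge $e$ there is a tough set $S = S(e)$ of size $k$ with $\omega(G-S) = k$ and $\omega((G-e)-S) = k+1$, and a vertex in the smallest component has degree at most roughly $\frac{n-k}{k+1} + k$. Analyzing this bound does yield $\delta(G) \le \frac{n}{3}+1$ \emph{provided} $2 \le k \le \frac{n}{3}$ (and the case $k=1$ is handled separately via a Hamiltonicity criterion). So your dichotomy idea proves exactly this much: if $\delta(G) > \frac{n}{3}+1$, then $k(e) > \frac{n}{3}$ for \emph{every} edge $e$.

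The genuine gap is that you have no plan for the remaining case $k > \frac{n}{3}$. Your inequality $d(v) \le |S| + c - 1$ is useless here: with $|S| > \frac{n}{3}$, even a singleton component gives only $d(v) \le |S|$, which can exceed $\frac{n}{3}+1$. The ``dichotomy on component sizes'' cannot close this, because when $|S|$ is large the components are automatically small and your bound degenerates. The paper's actual proof requires a second, substantially harder lemma: by a delicate double-count of edges between $S$ and $V\setminus S$ (lower bound from the minimum degree, upper bound from the assumption that no vertex of $S$ has $\ge \frac{n}{3}+1$ neighbors in small components), one finds an edge $f = ab$ whose endpoints have combined open neighborhood of size $> \frac{2n}{3} - 1$. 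Only then does the contradiction arrive: the set $S(f)$ must produce $k+1 > \frac{n}{3}+1$ components, one for each of $a$, $b$, and $k-1$ further vertices adjacent to neither --- but fewer than $\frac{n}{3}+1$ such non-neighbors exist. Your proposal does not contain this edge-counting step or anything that would substitute for it.
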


Toughness is related to the existence of Hamiltonian cycles. If a graph contains a Hamiltonian cycle, then it is necessarily 1-tough. The converse is not true, a well-known counterexample is the Petersen graph. It is easy to see that every minimally 1-tough, Hamiltonian graph is a cycle, since after deleting an edge that is not contained by the Hamiltonian cycle, the resulting graph is still $1$-tough.

Let us introduce a class of graphs that is frequently studied while dealing with problems related to Hamiltonian cycles. 






\begin{defi}
  The graph $K_{1,3}$ is called a claw. A graph is said to be claw-free, if it does not contain a claw as an induced subgraph.
\end{defi}

Problems about connectivity in claw-free graphs can be handled more easily, since every vertex of a cutset is adjacent to at most two components. We give a complete characterization of minimally 1-tough, claw-free graphs. 

\begin{thm}
 \label{clawfree-sajat}
 If $G$ is a minimally 1-tough, claw-free graph of order $n \ge 4$, then $G=C_n$.
\end{thm}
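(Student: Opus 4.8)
The plan is to show that $G$ is $2$-regular; since $G$ is $1$-tough it is $2$-connected, hence connected with $\delta(G)\ge 2$, and a connected $2$-regular graph is exactly a cycle $C_n$. Because $G$ is claw-free, it suffices to prove that $G$ is triangle-free: if some vertex $v$ had $d(v)\ge 3$, then in a triangle-free graph any three neighbours of $v$ would be pairwise non-adjacent, forming an induced $K_{1,3}$. (Note that $G$ is not complete, since $\tau(K_n)=\infty\ne 1$, so genuine cutsets and tough sets exist.) Thus the whole theorem reduces to the single claim that a minimally $1$-tough claw-free graph of order $n\ge 4$ has no triangle.

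Two lemmas drive the argument. First, a \emph{bridge lemma} extracted from minimality: for every edge $uv$ there is a tough set $S$ (so $\omega(G-S)=|S|$) with $u,v\notin S$ such that $uv$ is a bridge of $G-S$. Indeed $\tau(G-uv)<1$ forces a set $S$ with $\omega((G-uv)-S)>|S|\ge\omega(G-S)$; deleting a single edge raises the component count by at most one, so $S$ must be a tough set and $uv$ a bridge of $G-S$ with both ends outside $S$. A consequence I will use repeatedly is that any common neighbour of $u$ and $v$ lies in $S$: an off-$S$ common neighbour $w$ would sit in the same component as $u$ and give a path $u\,w\,v$ avoiding $uv$, contradicting that $uv$ is a bridge.

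Second, a \emph{structural lemma} from claw-freeness and $2$-connectivity: for any tough set $S$, each component of $G-S$ meets at least two vertices of $S$ (otherwise the unique neighbour would be a cut-vertex), while each vertex of $S$ meets at most two components (three would yield a claw at that vertex). Since $G-S$ has exactly $|S|$ components, counting the $S$–component incidences forces equality on both sides: every component meets exactly two vertices of $S$, and every vertex of $S$ meets exactly two components. A further short claw argument then shows that this bipartite incidence is a single alternating cycle $s_1,A_1,\dots,s_k,A_k$ and leaves no room for stray edges that would create a claw.

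Finally, the crux is to rule out triangles. Suppose $xyz$ is a triangle and apply the bridge lemma to $xy$; since $z$ is a common neighbour of $x$ and $y$, the witnessing tough set $S$ contains $z$, and $xy$ is a bridge of $G-S$, so $x$ and $y$ lie in a common component $C$ in which $xy$ is a bridge, splitting $C$ into an $x$-side and a $y$-side with $z$ adjacent to both. Feeding this into the structural lemma (the exact incidence count, the second $S$-neighbour of $C$, and the clique structure of $N(z)$ inside each component it meets) while simultaneously applying the bridge lemma to the other two triangle edges $xz$ and $yz$ (whose witnessing tough sets must contain $y$ and $x$ respectively), I expect to force either an induced claw at a vertex of $S$ or a violation of the two-per-side incidence count. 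This triangle-elimination step is the main obstacle: a triangle survives every check made against a \emph{single} tough set, since minimality is precisely what supplies the tough set containing the apex, so the contradiction has to be squeezed out by using all three edges of the triangle at once and carefully tracking how the second $S$-neighbours and any $S$–$S$ edges attach. Once triangles are excluded, claw-freeness gives $\Delta(G)\le 2$, and with $\delta(G)\ge 2$ and connectivity we conclude $G=C_n$. Alternatively, one could try to upgrade the structural lemma to exhibit a Hamiltonian cycle and then invoke the already-noted fact that a minimally $1$-tough Hamiltonian graph is a cycle.
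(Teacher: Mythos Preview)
Your overall reduction is sound and matches the paper: both arguments boil down to showing that a minimally $1$-tough claw-free graph has no triangle, and both rely on the bridge lemma (the paper's Claim~\ref{min1toughlemma}) together with the ``two components per $S$-vertex, two $S$-vertices per component'' structure for tough sets in claw-free graphs (the paper's Lemma~\ref{clawfreelemma}). Your observation that every common neighbour of the endpoints of an edge must land in the witnessing set $S$ is exactly what drives both proofs.

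However, the triangle-elimination step --- which you yourself flag as ``the main obstacle'' --- is not actually carried out; you write only that you ``expect to force'' a contradiction by combining the three witnesses. This is a genuine gap, and the paper's proof shows that the missing work is substantial. Two concrete issues: first, the witness $S(e)$ from the bridge lemma can have $|S|=1$, in which case $S$ is not a cutset at all and your structural lemma (which needs each component to meet at least two vertices of $S$) does not apply. The paper addresses this with a separate argument (Claim~\ref{haromszog_mintoughsetjei}): if \emph{all three} triangle edges had singleton witnesses, one deduces that some triangle vertex has degree~$2$, which is ruled out by the auxiliary Lemma~\ref{no2degtriangle} (triangle vertices in a minimally $1$-tough graph have degree $\ge 3$). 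You have no analogue of either step. Second, even once one has a tough set $S$ of size $\ge 2$ witnessing one triangle edge, the contradiction does not fall out of the incidence count alone; the paper passes to a carefully chosen tough set $\{v,v_2\}$ of size exactly two and runs a three-case analysis on the sizes of the components containing $u$ and $w$, repeatedly invoking Lemma~\ref{clawfreelemma} and Lemma~\ref{no2degtriangle}. Your plan to ``simultaneously apply the bridge lemma to the other two triangle edges'' is plausible in spirit but does not yet connect the separate witnesses $S$, $S'$, $S''$ in any usable way. (Incidentally, your assertion that the $S$--component incidence forms a \emph{single} alternating cycle is stronger than what the counting gives and is not needed.)
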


Thus we see that Kriesell's conjecture is true in a very strong sense if the graph is claw-free. Or equivalently, the family of minimally 1-tough, claw-free graphs is small. On the other hand, we show that in general the class of minimally 1-tough graphs is large. 

\begin{thm} \label{beagyazas}
For every positive rational number $t$, any graph can be embedded as an induced subgraph into a minimally $t$-tough graph.
\end{thm}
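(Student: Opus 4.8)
\emph{Setup and reduction.} Write $t=a/b$ with $\gcd(a,b)=1$. It suffices to produce, for an arbitrary graph $H$ on $n$ vertices $\{u_1,\dots,u_n\}$, a graph $G$ with $G[V(H)]=H$, with $\tau(G)=t$, and with every edge of $G$ critical (i.e. $\tau(G-e)<t$). The plan is to place $V(H)$ \emph{inside} the would-be minimum cutset and to attach an auxiliary family of ``component'' vertices whose sole purpose is to pin the toughness to $t$. Concretely, I would take an independent set $I$ (or a union of small cliques, if that gives more flexibility) together with a carefully chosen bipartite join between $V(H)$ and $I$; no edge is ever added inside $V(H)$, so $H$ remains induced. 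Divisibility is handled up front: if $a\nmid n$ I first pad $H$ with a few extra vertices that are isolated in $H$ (they may be joined to $I$), so that the sizes below work out; $H$ is still induced on its original vertex set, which is all the embedding requires.

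\emph{Fixing the toughness.} The canonical cutset is essentially $S_0=V(H)$: after deleting it, exactly the auxiliary vertices whose neighbourhood lies in $V(H)$ survive, and they are isolated, giving $\omega(G-S_0)=|I|$. Choosing $|I|=nb/a$ makes the ratio $|S_0|/\omega(G-S_0)=n/|I|=t$, so $\tau(G)\le t$. For the reverse inequality I would impose an expansion (Hall-type) condition on the bipartite part: the adjacencies between $V(H)$ and $I$ are designed so that deleting any $S\subseteq V(H)$ isolates at most $|S|/t$ auxiliary vertices, while leaving the remaining $V(H)$-vertices glued together through the surviving auxiliary vertices. Cuts that delete auxiliary vertices, or all of $V(H)$ and then some, are strictly worse and are dispatched routinely. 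This yields $\tau(G)=t$, with $S_0$ as a witnessing tough set.

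\emph{Minimality.} I would use the standard criterion: an edge $e=xy$ is critical as soon as there is a cutset $S$ of $G-e$ with $\omega\bigl((G-e)-S\bigr)>|S|/t$. For an edge $e=u_iu_j$ of $H$, take $S=V(H)\setminus\{u_i,u_j\}$; inside $V(H)$ only the edge $e$ now survives, so the two endpoints (with their private auxiliary neighbours) form a single component joined exactly by $e$, provided the construction guarantees that $H$-adjacent pairs have \emph{no common} auxiliary neighbour. Deleting $e$ then splits that component in two, raising $\omega$ by one; tuning the number $z_{ij}$ of auxiliary vertices adjacent to neither $u_i$ nor $u_j$ so that the ratio $(n-2)/(1+z_{ij})$ sits just above $t$ while $(n-2)/(2+z_{ij})$ sits just below $t$ makes $e$ critical. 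Edges joining $V(H)$ to $I$, and edges inside the auxiliary part, are handled by entirely analogous ``bridge in a nearly-tough cut'' arguments.

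\emph{Main obstacle.} The crux is that the two requirements pull in opposite directions. Strong connectivity between $V(H)$ and $I$ is exactly what secures the lower bound $\tau(G)\ge t$, but it simultaneously provides alternative routes between $u_i$ and $u_j$ and thereby \emph{destroys} the criticality of the internal edges of $H$ (this is precisely why the fully-joined graph $\overline{K_{|I|}}+H$ has the right toughness yet is far from minimal). Reconciling the global expansion condition needed for $\tau(G)=t$ with the local ``every edge is a bridge in some nearly-tough cut'' condition, uniformly over an arbitrary $H$ and an arbitrary rational $t=a/b$, is the heart of the proof: it forces a precise combinatorial design of the bipartite adjacencies (controlling each $|N(u_i)\cup N(u_j)\cap I|$ and the residual counts $z_{ij}$) together with the divisibility bookkeeping. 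I expect this balancing act, rather than any single toughness computation, to be where the real work lies.
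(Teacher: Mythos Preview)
Your proposal is a plan, not a proof: you correctly locate the central tension---global expansion to force $\tau(G)\ge t$ versus local ``bridge'' structure to make each $H$-edge critical---but you do not resolve it, and in fact the scheme as written cannot be resolved for dense $H$. Take $H=K_n$ and $t=1$. Your criticality argument demands that adjacent $u_i,u_j$ share no auxiliary neighbour, so every auxiliary vertex of $I$ is joined to at most one vertex of $V(H)$. With $|I|=n$ this forces a perfect matching between $V(H)$ and $I$. Now remove $S=V(H)\setminus\{u_i,u_j\}$: you delete $n-2$ vertices and get $n-1$ components (the edge $u_iu_j$ with its two pendants, plus $n-2$ isolated auxiliaries), so $\tau(G)\le (n-2)/(n-1)<1$, contradicting $\tau(G)=t$. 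Allowing small cliques in $I$ does not rescue this; the point is that forbidding common neighbours across every edge of a dense $H$ kills the expansion you need. The ``precise combinatorial design'' you promise therefore does not exist in general.

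The paper sidesteps this obstacle with two ideas you are missing. First, it does \emph{not} try to make every edge of the host graph critical at once: it builds an $H$ with $\tau(H)=t$ in which only the edges of the embedded copy of $G$ are guaranteed to be critical, and then simply deletes non-critical edges one by one until none remain; the $G$-edges are never touched, so $G$ stays induced. Second---and this is what makes the $G$-edges critical without any bipartite design---it first replaces $G$ by an $\alpha$-critical supergraph (Lov\'asz, Problems~8.13--8.14). In the construction the toughness of $H$ is governed by $\alpha(G)$, so deleting any edge of $G$ bumps $\alpha(G)$ by one and immediately drops $\tau$ below $t$. The vertices of $G$ sit in the tough set exactly as you propose, but the auxiliary part is a complete join (large cliques $U_i$ glued to a common $W$), not a sparse bipartite gadget; the $\alpha$-criticality, not any Hall-type condition, is doing the work.
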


The paper is organized as follows. In Section \ref{section_main} we prove Theorem \ref{fotetel} which is the main result of this paper. In Section \ref{section_clawfree} we prove Theorem \ref{clawfree-sajat} and in Section \ref{section_beagyazas} we prove Theorem \ref{beagyazas}. 


\section{Proof of the main result} \label{section_main}

Here we prove that every minimally 1-tough graph has a vertex of degree at most $\frac{n}{3} + 1$. First we need a claim that has a key role in the proofs, then we continue with two lemmas.

\begin{claim} \label{min1toughlemma}
 If $G$ is a minimally $1$-tough graph, then for every edge $e \in E(G)$ there exists a vertex set ${S=S(e) \subseteq V(G)}$  with 
 \[ {\omega(G-S) = |S|} \quad \text{and} \quad \omega \big( (G-e)-S \big) = |S| + 1 \text{.} \]
\end{claim}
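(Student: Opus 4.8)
The plan is to extract the desired set $S$ directly from a toughness-violating cutset of $G-e$ and to show, by a short sandwiching argument, that it automatically satisfies both equalities; no ad hoc construction should be needed.

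First I would fix an edge $e \in E(G)$ and invoke minimality. Since $\tau(G-e) < 1$, the graph $G-e$ is not $1$-tough, so there is a cutset $S$ of $G-e$ with $\omega((G-e)-S) > |S|$; as both sides are integers this reads $\omega((G-e)-S) \ge |S|+1$. I would also note that $S \neq \emptyset$: because $\tau(G)=1$ forces $G$ to be $2$-connected (as stated in the introduction), $G-e$ is still connected, so the empty set is not a cutset of $G-e$, and hence $|S| \ge 1$.

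The central observation is that deleting a single edge increases the number of components by at most one, that is $\omega((G-e)-S) \le \omega(G-S)+1$ for every vertex set $S$. Combined with the previous bound this gives $|S|+1 \le \omega(G-S)+1$, i.e. $\omega(G-S) \ge |S|$. For the reverse inequality I would use $\tau(G)=1$: if $\omega(G-S) \ge 2$ then $S$ is a genuine cutset of $G$, and $1$-toughness yields $\omega(G-S) \le |S|$. The two inequalities force $\omega(G-S) = |S|$, and then $|S|+1 \le \omega((G-e)-S) \le \omega(G-S)+1 = |S|+1$ forces $\omega((G-e)-S) = |S|+1$, which are exactly the two required equalities.

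The only point I expect to need care is the degenerate possibility that $G-S$ is connected, so that $S$ is not literally a cutset of $G$ and the $1$-toughness inequality cannot be quoted. This case is harmless: if $\omega(G-S)=1$, then $\omega(G-S) \ge |S|$ gives $|S| \le 1$, hence $|S|=1$ and $\omega(G-S)=1=|S|$, while $\omega((G-e)-S) \le \omega(G-S)+1 = 2$ together with $\omega((G-e)-S) \ge |S|+1 = 2$ again yields $\omega((G-e)-S) = |S|+1$. Thus the same $S$ works in every case, completing the proof.
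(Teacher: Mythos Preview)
Your proof is correct and follows the same approach as the paper: pick a cutset $S$ of $G-e$ witnessing $\tau(G-e)<1$, then sandwich using $\tau(G)=1$ and the fact that removing one edge raises the component count by at most one. The paper's version is terser and silently absorbs the degenerate case $\omega(G-S)=1$ into the inequality $\omega(G-S)\le |S|$, whereas you spell it out explicitly; this is a matter of exposition, not substance.
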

\begin{proof} 
 Let $e$ be an arbitrary edge of $G$. Since $G$ is minimally 1-tough, ${\tau (G-e) < 1}$, so there exists a cutset ${S=S(e) \subseteq V(G-e)=V(G)}$ in $G-e$ satisfying that $\omega \big( (G-e)-S \big) > |S|$. On the other hand, $\tau (G)=1$, so $\omega (G-S) \le |S|$. This is only possible if $e$ connects two components of $(G-e)-S$, which means $\omega \big( (G-e)-S \big) = |S| + 1$ and $\omega (G-S) = |S|$.
\end{proof}

\begin{defi}
Let $G$ be a minimally 1-tough graph, and $e$ an arbitrary edge of $G$. Let us define $k(e)$ to be the minimal size of the vertex set $S$ guaranteed by Claim \ref{min1toughlemma}.
\end{defi}

In the proof of the next Lemma, we need the following theorem.

\begin{thm}[\cite{shortcut}] \label{shortcut}
Let $G$ be a $2$-connected graph on $n$ vertices with $\delta(G) \ge \big( n+\kappa(G) \big)/3$. Then $G$ is Hamiltonian.
\end{thm}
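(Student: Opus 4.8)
Since the statement is a sufficient condition for Hamiltonicity, the natural plan is to argue by contradiction with the \emph{longest cycle} method, in the spirit of the Chvátal--Erd\H{o}s theorem, and then to upgrade the resulting Dirac-type estimate to the sharper bound carrying the factor $3$. Assume $G$ is $2$-connected, write $\kappa=\kappa(G)$ and $\delta=\delta(G)\ge (n+\kappa)/3$, and suppose for contradiction that $G$ is not Hamiltonian. Fix a longest cycle $C$ with an orientation. As $G$ is not Hamiltonian, $R=G-V(C)$ is nonempty; choose a component $H$ of $R$ and a vertex $h\in H$. Menger's theorem yields $\kappa$ internally disjoint $h$--$C$ paths ending at distinct vertices, so the attachment set $A=\{a_1,\dots,a_s\}$, listed in cyclic order along $C$, satisfies $s\ge\kappa$. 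Writing $a_i^+$ for the successor of $a_i$ on $C$, the standard rerouting argument shows that $I=\{h\}\cup\{a_1^+,\dots,a_s^+\}$ is independent: an edge inside $\{a_1^+,\dots,a_s^+\}$ or from $h$ to some $a_i^+$ would let us splice a path through $H$ into $C$ and lengthen it. Hence $\alpha(G)\ge s+1\ge\kappa+1$, so the Chvátal--Erd\H{o}s condition $\kappa\ge\alpha$ fails by exactly one, and indeed both extremal graphs below have $\alpha=\kappa+1$.

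The easy half of the estimate is a degree count on this independent set. Each successor satisfies $N(a_i^+)\subseteq V(C)\setminus\{a_1^+,\dots,a_s^+\}$ (it has no neighbour among the other successors and, at least when $R$ is connected, none in $R$), so $d(a_i^+)\le c-s$ where $c=|V(C)|$; and $N(h)\cap V(C)\subseteq A$ gives $d(h)\le |R|-1+s$. Combining $\delta\le d(a_i^+)$ and $\delta\le d(h)$ with $n=c+|R|$ yields
\[ n=c+|R|\ge(\delta+s)+(\delta-s+1)=2\delta+1, \]
i.e. the Dirac-type bound $\delta\le (n-1)/2$. Since $(n-1)/2<(n+\kappa)/3$ exactly when $\kappa>(n-3)/2$, this already settles the theorem in the high-connectivity range, and the graph $K_{a}\vee\overline{K_{a+1}}=K_{a,a+1}$ (with $\delta=\kappa=a$, $n=2a+1$, so $2\delta+1=n$) shows it is sharp there.

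The real content lies in the low-connectivity range $\kappa\le (n-3)/2$, where one must improve $2\delta$ to obtain $3\delta\le n+\kappa-1$, i.e. $n\ge 3\delta-\kappa+1$; the extremal graph here is $K_2\vee(K_b\cup K_b\cup K_b)$, for which $\delta=b+1$, $\kappa=2$, $n=3b+2$ and $3\delta=n+\kappa-1$ holds with equality. The plan is to exploit the $s\ge\kappa$ arcs into which $A$ partitions $C$ together with the longest-cycle \emph{insertion} technique: because $C$ is longest, the vertices of $H$ cannot be inserted into these arcs, which forces the arcs to be long and simultaneously restricts how the degrees of $h$ and of the successors can be distributed among $R$ and the arcs. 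Converting these structural restrictions into the three separate contributions of $\delta$ needed for $n\ge 3\delta-\kappa+1$ is the heart of the proof, after which $\delta\ge (n+\kappa)/3$ produces the contradiction.

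The main obstacle is precisely this factor-$3$ step. Unlike the two easy inequalities, which bound the cycle length and $|R|$ independently, the sharp bound couples them through the arc structure, and the two extremal graphs above are genuinely different in character (tiny $R$ with large $\kappa$ versus large $R$ with $\kappa=2$), so the argument must either treat the connectivity ranges separately or find a single arc-counting that interpolates between them. Two technical points must also be cleared along the way: vertices of $C$ adjacent to several successors, which weaken the naive count, and the possibility that $R$ has more than one component, which is exactly what breaks the clean bound $d(a_i^+)\le c-s$ and must be repaired. These are where the real work lies; the surrounding contradiction scheme via a longest cycle is routine.
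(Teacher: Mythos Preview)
The paper does not prove this theorem at all: it is quoted from H\"aggkvist and Nicoghossian \cite{shortcut} and used as a black box inside the proof of Lemma~\ref{lemma1} (specifically in the $k=1$ case). So there is no ``paper's own proof'' to compare against.

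As for your proposal on its own merits, it is a plan rather than a proof, and you say so yourself. The longest-cycle set-up, the independent set $\{h,a_1^+,\dots,a_s^+\}$, and the resulting Dirac-type estimate $n\ge 2\delta+1$ are all standard and correct. But the entire content of the theorem is the improvement from $2\delta$ to $3\delta-\kappa$, and you only describe the shape of the argument (``exploit the $s\ge\kappa$ arcs \dots\ together with the longest-cycle insertion technique'') without carrying out a single inequality of it. You also correctly flag, but do not resolve, the two genuine difficulties: successors that may have neighbours in components of $R$ other than $H$, and overcounting when a vertex of $C$ is adjacent to several successors. Until the arc-counting that produces $n\ge 3\delta-\kappa+1$ is actually written down and those two issues are handled, this is an outline of where the proof should go, not a proof.
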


\begin{lemma} \label{lemma1}
Let $G$ be a minimally 1-tough graph on $n$ vertices with $\delta(G) > \frac{n}{3} + 1$. Then $k(e) > \frac{n}{3}$ for any $e \in E(G)$.
\end{lemma}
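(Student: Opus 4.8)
The plan is to argue by contradiction, using the Hamiltonicity criterion of Theorem~\ref{shortcut} together with the observation made in the introduction that a minimally $1$-tough Hamiltonian graph must be the cycle $C_n$. Suppose some edge $e$ satisfies $k(e) \le \frac{n}{3}$, and fix a set $S$ of minimal size $s = k(e) \le \frac{n}{3}$ with $\omega(G-S) = s$ and $\omega\big((G-e)-S\big) = s+1$, as provided by Claim~\ref{min1toughlemma}. The goal is to show that $G$ is Hamiltonian: since $G$ is minimally $1$-tough this would force $G = C_n$, whose minimum degree is $2$, contradicting $\delta(G) > \frac{n}{3}+1 > 2$.

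The heart of the argument is a counting step that upgrades the trivial bound $\kappa(G) \le s \le \frac{n}{3}$ to the much stronger $s < 3$. Write $C_1, \dots, C_s$ for the $s$ components of $G-S$. Every vertex $v \in C_i$ has all of its at least $\delta(G)$ neighbours inside $C_i \cup S$, so $\delta(G) \le |C_i| - 1 + s$, giving $|C_i| > \frac{n}{3} + 2 - s$ for each $i$. Summing over all $s$ components and using $\sum_i |C_i| = n - s$ yields $n - s > s\big(\tfrac{n}{3} + 2 - s\big)$, which rearranges to $\big(s-3\big)\big(s-\tfrac{n}{3}\big) > 0$. Since $s \le \frac{n}{3}$, the second factor is nonpositive, so the first must be negative, i.e. $s < 3$.

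It then remains to deduce $\kappa(G) = 2$ and apply Theorem~\ref{shortcut}. As $G$ is $1$-tough it is $2$-connected, so $\kappa(G) \ge 2$. If $s = 2$, then $S$ is itself a cutset of size $2$ and $\kappa(G) = 2$ at once. In the only remaining case $s = 1$, the set $S=\{v\}$ leaves $G-S$ connected and $e = xy$ is a bridge of it; removing $v$ together with the endpoint $x$ separates the side containing $y$ from the rest, and the rest is nonempty because $d(x) > 2$, so $\{v,x\}$ is a cutset of size $2$ and again $\kappa(G) = 2$. Now $\delta(G) > \frac{n}{3} + 1 = \frac{n+3}{3} > \frac{n + \kappa(G)}{3}$, so Theorem~\ref{shortcut} shows that $G$ is Hamiltonian, completing the contradiction.

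I expect the counting step to be the main obstacle: the key realisation is that the degree hypothesis must be combined with the identity $\omega(G-S) = |S|$ so as to control the number and the sizes of the components of $G - S$ simultaneously. The clean factorization $\big(s-3\big)\big(s-\tfrac{n}{3}\big)$ reveals that the threshold $\frac{n}{3}+1$ is precisely what is needed to push $s$ below $3$; any weaker degree bound would leave intermediate values of $s$ uncontrolled and would not yield the bound on $\kappa(G)$ required by Theorem~\ref{shortcut}. The small case $s = 1$, in which $S$ fails to be a cutset of $G$, is the only delicate point of bookkeeping and must be treated separately.
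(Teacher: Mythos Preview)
Your proof is correct, and the counting step is genuinely different from the paper's. The paper works in $(G-e)-S$, which has $k+1$ components, picks the \emph{smallest} one (of size at most $\lfloor (n-k)/(k+1)\rfloor$), and bounds the degree of a single vertex in it that is not an endpoint of $e$; this produces the rational function $f_n(k)=(n+k^2-k-1)/(k+1)$, whose convexity must then be analysed to dispose of the whole range $2\le k\le n/3$ at once, with $k=1$ handled separately via Theorem~\ref{shortcut}. You instead work in $G-S$, bound the size of \emph{every} component from below by $\delta(G)-s+1$, and sum; this sidesteps both the endpoint-of-$e$ nuisance and the function analysis, and the factorisation $(s-3)(s-n/3)>0$ drops out immediately. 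The price you pay is that the counting alone only forces $s\le 2$ rather than $s\le 1$, so you must invoke Theorem~\ref{shortcut} for $s=2$ as well --- but since $S$ is then an honest $2$-cutset, this case is in fact easier than $s=1$. Overall your route is slightly cleaner, at the cost of leaning on Theorem~\ref{shortcut} a little more heavily.
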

\begin{proof}
Let $e$ be an arbitrary edge of $G$. By Claim \ref{min1toughlemma} there exists a number $k=k(e)$ and a set of $k$ vertices, whose removal from $G-e$ leaves exactly $k+1$ connected components. Clearly, there is no edge between two different components except $e$. Among these components there must be one with at most $\left\lfloor \frac{n-k}{k+1} \right\rfloor$ vertices, and inside this component every vertex can have at most $\left\lfloor \frac{n-k}{k+1} \right\rfloor -1$ neighbors. If this component has size 1, then the vertex inside it has degree at most $k+1$ in $G$, so
\[ \frac{n}{3} + 1 < \delta(G) \le k+1 \text{,} \]
which means that $k > \frac{n}{3}$. Otherwise there exists a vertex in this component, which is not an endpoint of $e$, so its degree in $G$ is at most
\[ \left\lfloor \frac{n-k}{k+1} \right\rfloor -1 + k \le \frac{n-k}{k+1} -1 + k = \frac{n+k^2-k-1}{k+1} \text{.} \]

Consider the function
\[f_n (k) = \frac{n+k^2-k-1}{k+1} \text{.} \]
Note that for any fixed $n$, $f$ is monotone decreasing in $k$ if $0 \le k \le \sqrt{n+1} - 1$ and monotone increasing if $\sqrt{n+1} - 1 < k \le n-1$.

We show that if $k \le \frac{n}{3}$, then $\delta(G) \le \frac{n}{3} + 1$.

\textit{Case 1:} $2 \le k \le \frac{n}{3}$. Since $f_n (k)$ is an upper bound of the minimum degree, it is enough to show that $f_n(k) \le \frac{n}{3} + 1$. The above mentioned property of the function implies that it is enough to show this for $k=2$ and $k=\frac{n}{3}$.
\[ f_n (2) = \frac{n+1}{3} < \frac{n}{3} + 1 \text{,} \]
\[ f_n\left(\frac{n}3\right) = \frac{n^2 + 6n - 9}{3n + 9} = \frac{(n+3)^2-18}{3(n+3)} < \frac{n+3}{3} = \frac{n}{3} + 1 \text{.} \]

\textit{Case 2:} $k=1$. Since $G$ is $1$-tough, $\kappa(G) \ge 2$. Let $e = uv$ be such an edge, for which $k(e) = 1$. Then there exists a single vertex $w$ that disconnects the graph $G-e$, so $\{ u,w \}$ or $\{ v,w \}$ is a cutset in $G$. Thus $\kappa(G) = 2$. Since $\delta(G) > \frac{n}{3} + 1$, by Theorem \ref{shortcut} $G$ is Hamiltonian, but $G \ne C_n$, which contradicts the fact that $G$ is minimally 1-tough.
\end{proof}

Let us define the {\em open neighborhood} an edge $f=\{a,b\}$. It is the set of vertices adjacent to either $a$ or $b$ excluding $a$ and $b$.
\begin{lemma} \label{work}
If $G$ is a minimally 1-tough graph with $\delta(G) > \frac{n}{3}+1$, then there are two vertices $a,b \in V(G)$ connected by an edge $f \in E(G)$ such that their open neighborhood has size more than $\frac{2n}{3}-1$.
\end{lemma}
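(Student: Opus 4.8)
The plan is to reduce the statement to a purely local condition and then locate, via the minimal-toughness structure, a vertex whose neighbourhood is trapped inside a small cutset.

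First I would record the elementary identity that for an edge $f=\{a,b\}$ the open neighbourhood has size
\[ |(N(a)\cup N(b))\setminus\{a,b\}| = \deg(a)+\deg(b)-|N(a)\cap N(b)|-2, \]
since $a\in N(b)$ and $b\in N(a)$. Using $\delta(G)>\tfrac n3+1$ this is strictly larger than $\tfrac{2n}3-|N(a)\cap N(b)|$. Hence it suffices to produce a single edge $f=\{a,b\}$ whose endpoints have at most one common neighbour, i.e.\ an edge lying in at most one triangle: for such an edge the open neighbourhood already exceeds $\tfrac{2n}3-1$. As a degenerate shortcut, if $\Delta(G)>\tfrac{2n}3+1$ then any edge at a maximum-degree vertex already works, so I may assume that all degrees lie in $(\tfrac n3+1,\tfrac{2n}3+1]$.

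Next I would use the minimal-toughness machinery to find a convenient vertex. Apply Claim~\ref{min1toughlemma} to an arbitrary edge $e=\{u,v\}$ to get a set $S$ with $|S|=k=k(e)$, where $k>\tfrac n3$ by Lemma~\ref{lemma1}; write $C_1\ni u$, $C_2\ni v$ and $C_3,\dots,C_{k+1}$ for the $k+1$ components of $(G-e)-S$. If each of the $k-1$ components avoiding $\{u,v\}$ had at least two vertices, the total number of vertices outside $S$ would be at least $2(k-1)+2=2k$, whereas it equals $n-k$; this forces $3k\le n$, contradicting $k>\tfrac n3$. Therefore some component $C_j$ with $j\ge 3$ is a single vertex $\{x\}$, and since $x\notin\{u,v\}$ all of its (more than $\tfrac n3+1$) neighbours lie in $S$, that is, $N(x)\subseteq S$.

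Finally I would look for the required edge among the edges $\{x,b\}$ with $b\in N(x)$. Because $N(x)\subseteq S$, every common neighbour of $x$ and $b$ lies in $N(x)$, so the number of such common neighbours is exactly $\deg_{G[N(x)]}(b)$. Thus the whole statement reduces to showing that the graph $G[N(x)]$ induced on the neighbourhood of $x$ contains a vertex of degree at most $1$. This is the step I expect to be the main obstacle: a priori $G[N(x)]$ could have minimum degree at least $2$, with every edge at $x$ lying in two triangles. To defeat this I would exploit the remaining structure, namely that the $k-1$ small components each attach to $S$ and, having minimum degree exceeding $\tfrac n3+1$, force many edges into $S$; counting these edges should either expose a vertex of $N(x)$ of degree at most $1$ inside $N(x)$, or equivalently a neighbour $b\in N(x)$ with enough neighbours outside $N(x)$ to push $|N(x)\cup N(b)|$ past $\tfrac{2n}3+1$ directly. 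The clean sub-case $N(x)=S$ (when $x$ sees the whole cutset) is instructive, since then all component-to-$S$ edges are usable; the difficulty is propagating this to the case $N(x)\subsetneq S$ and handling the possibility of one large component, which is where a careful case analysis on the component sizes will be needed.
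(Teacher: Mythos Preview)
Your reduction in the first paragraph is correct, and the pigeonhole that produces a singleton component $\{x\}$ with $N(x)\subseteq S$ is fine. But the proposal stops short exactly where the real work lies: you do not prove that $G[N(x)]$ has a vertex of degree at most $1$, nor that some $b\in N(x)$ has more than $n/3$ neighbours outside $N(x)$. The sentence ``counting these edges should either expose \ldots'' is a hope, not an argument, and there is no evident obstruction to $G[N(x)]$ being $2$-regular while the remaining components attach chiefly to $S\setminus N(x)$ (note $|S\setminus N(x)|$ can be close to $n/6$). Fixing the small-component vertex $x$ \emph{first} and then demanding $b\in N(x)$ throws away freedom that turns out to be needed.

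The paper's proof proceeds differently and avoids the triangle reduction altogether. It does \emph{not} show that the edge $f=ab$ lies in at most one triangle; instead it picks $a$ and $b$ on opposite sides of the cut (with $a$ in a component of size at most $2$ and $b\in B$ having at least $\tfrac n3+1$ neighbours in $A$), so that the $>\tfrac n3-1$ neighbours of $a$ in $B\setminus\{b\}$ and the $\ge\tfrac n3$ neighbours of $b$ in $A\setminus\{a\}$ are automatically disjoint, summing to more than $\tfrac{2n}{3}-1$ regardless of $|N(a)\cap N(b)|$. The substantive step is proving that such a $b$ exists, i.e.\ some vertex of $B$ both has $\ge\tfrac n3+1$ neighbours in $A$ \emph{and} one of those neighbours lies in a small component. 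This is done by a double count of $e(A,B)$: a lower bound from the degree hypothesis applied to vertices of $A$, and an upper bound derived under the assumption that no such $b$ exists (using that at least $\tfrac n6+2x$ components have size at most $2$); the two bounds are then shown to be incompatible for $0<x<\tfrac n6$. That counting argument is the missing ingredient in your outline, and it does not fit naturally into the ``edge in at most one triangle'' framework you set up.
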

\begin{proof}
Lemma \ref{lemma1} implies that $k(e) > \frac{n}{3}$ for all $e \in E(G)$. Let us fix an arbitrary edge $e\in E(G)$, and we define $x := k-\frac{n}{3}$. It is easy to see that $0 < x < \frac{n}{6}$, because removing at least $\frac{n}{2}$ vertices does not leave enough components. Let $B := S(e)$ denote the set of the removed vertices and let $A$ denote the set of the remaining vertices. Then $|A| = \frac{2n}{3}-x$, $|B|= \frac{n}{3} + x$ and by the choice of $B$ the number of components in $G-B$ is also $\frac{n}{3} + x$.

Our strategy is to prove that there exists a vertex $b \in B$ having at least $\frac{n}{3} +1$ neighbors in $A$ and among these neighbors there exists a vertex $a$ contained by a component of size at most 2 after the removal of $B$, see Figure~\ref{strategy}. Since $a$ has more than $\frac{n}{3}-1$ neighbors in $B \setminus \{b\}$ and $b$ has at least $\frac{n}{3}$ neighbors in $A \setminus \{ a \}$, their open neighborhood has size more than
\[ \frac{n}{3} - 1 + \frac{n}{3} = \frac{2n}{3}-1 \text{.} \]

\begin{figure}[H]
\begin{center}
\begin{tikzpicture}[scale=2]
\tikzstyle{vertex}=[draw,circle,fill=black,minimum size=3,inner sep=0]

\draw[rounded corners=5] (-1.5,-0.25) rectangle (1.5,0.25);
\node[vertex] (b) at (-1.25,0) [label= {[shift={(0.175,-0.6)}] $b$ }] {} ; 
\node[vertex] at (-0.75,0) {};
\node[vertex] at (-0.25,0) {};
\node[vertex] at (0.25,0) {};
\node[vertex] at (0.75,0) {};
\node[vertex] at (1.25,0) {};

\draw (-2.5,1.5) circle (0.20);
\draw (-1.9,1.5) circle (0.20);
\draw (-1.3,1.5) circle (0.20);
\draw (-0.3,1.5) ellipse (0.55 and 0.45);
\draw (1,1.5) ellipse (0.55 and 0.45);
\draw (2.3,1.5) ellipse (0.55 and 0.45);

\node[vertex] (a) at (-1.975,1.5) [label={[label distance=-3]45: $a$}] {};

\draw (a) -- (b) node[pos=0.5, left] {$f$};

\draw (a) -- (-1.4,0.9);
\draw (a) -- (-1.1,0.9);
\draw (a) -- (-0.8,0.9);
\draw (a) -- (-0.5,0.9);

\draw (b) -- (-0.1,0.6);
\draw (b) -- (-0.5,0.6);
\draw (b) -- (-0.9,0.6);
\draw (b) -- (-1.3,0.6);

\node[vertex] at (0.75,1.5) {};
\node[vertex] at (1.25,1.5) {};
\draw (0.75,1.5) -- (1.25,1.5) node[pos=0.5, above] {$e$};
\end{tikzpicture}
\caption{Finding an edge $f$ for which $G-f$ is $1$-tough.}
\label{strategy}
\end{center}
\end{figure}
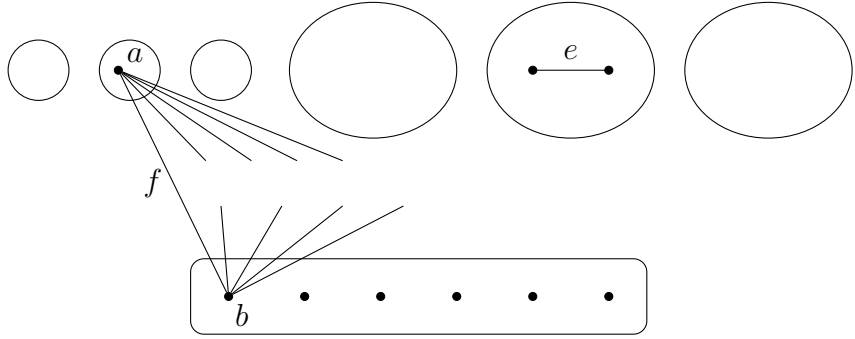

Suppose to the contrary that there exist no such vertices $a$ and $b$. Let $e(A,B)$ denote the number of edges between $A$ and $B$. We give a lower and an upper bound for $e(A,B)$, then we show that the lower bound is greater than the upper bound, which leads us to a contradiction.

\begin{enumerate}
 \item[I.] Lower bound: $e(A,B) > \frac{n^2}{9} + \frac{n}{3} + nx + x - 4 x^2$.

It is well-known, that the number of the edges in a graph with $n_0$ vertices and $k_0$ components is at most $\binom{n_0-k_0+1}{2}$. Hence the number of the edges in $A$ is at most 
\[ \binom{\left(\frac{2n}{3}-x\right) - \left(\frac{n}{3} + x\right) + 1}{2} = \binom{\frac{n}{3} - 2x + 1}{2} \text{.} \]
Since every degree is more than $\frac{n}{3} + 1$, the following lower bound can be given for $e(A,B)$.
\begin{gather*}
 \left( \frac{2n}{3} - x \right) \left( \frac{n}{3} + 1 \right) - 2 \cdot \binom{\frac{n}{3} - 2x + 1}{2} = \\
 =\left( \frac{2n}{3} - x \right) \left( \frac{n}{3} + 1 \right) - \left( \frac{n}{3} - 2x + 1 \right) \left( \frac{n}{3} - 2x \right) = \\
 = \frac{n^2}{9} + \frac{n}{3} + nx + x - 4 x^2
\end{gather*}

\item[II.] Upper bound: $e(A,B) < \frac{n}{3} \left( \frac{n}{3} + 1 \right) + x \left( \frac{n}{2} - 3x \right)$.

To prove the upper bound, we need the following claim. 
\begin{claim}
 After the removal of $B$ there are at least $\left( \frac{n}{6} + 2x \right)$ components of size at most $2$.
\end{claim}
\begin{proof}
After the removal of $B$ the remaining graph has $\frac{2n}{3} - x$ vertices and $\frac{n}{3} + x$ components. In every component there must be at least one vertex, so the other $\left( \frac{2n}{3} - x \right) - \left( \frac{n}{3} + x \right)$ vertices can create at most 
\[ \frac{1}{2} \left( \left( \frac{2n}{3} - x \right) - \left( \frac{n}{3} + x \right) \right) = \frac{1}{2} \cdot \left( \frac{n}{3} - 2x \right) = \frac{n}{6} - x \]
components with size at least 3. So there must be at least
\[ \left( \frac{n}{3} + x \right) - \left( \frac{n}{6} - x \right) = \frac{n}{6} + 2x \]
components having size at most 2.
\end{proof}

Now we return to the proof of the upper bound. After removing $B$, the components of size at most 2 have more than $\frac{n}{3}$ neighbors in $B$. By our assumption, each of these neighbors is connected to less than $\frac{n}{3} + 1$ vertices in $A$. Then all the remaining less than $x$ vertices in $B$ are such that their neighbors in $A$ lie in a component of size at least $3$. So all these remaining less than $x$ vertices in $B$ can be adjacent to at most
\[ \left( \frac{2n}{3} - x \right) - \left( \frac{n}{6} + 2x \right) = \frac{n}{2} - 3x \]
vertices in $A$.

Hence, there are more than $\frac{n}{3}$ vertices in $B$ that have less than $\frac{n}{3}+1$ neighbors in $A$ and the remaining less than $x$ vertices in $B$ have at most $\frac{n}{2} - 3x$ neighbors in $A$, see Figure~\ref{upper_bound}.

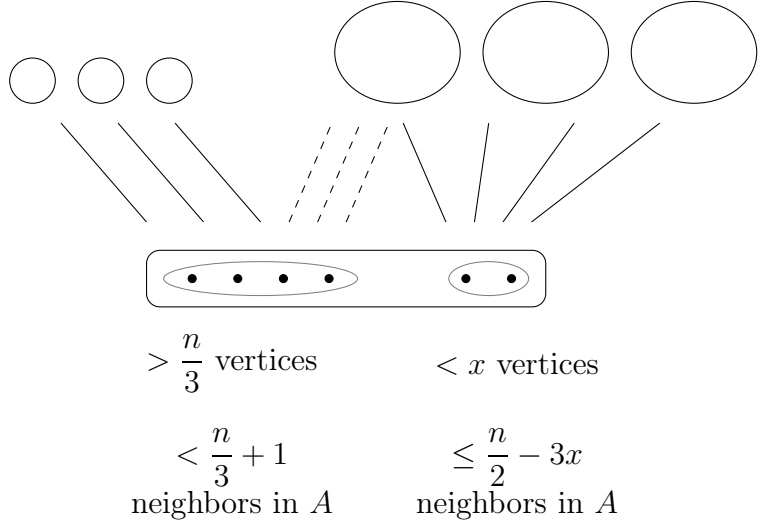
\begin{figure}[H]
\begin{center}
\begin{tikzpicture}[scale=1.5]
\tikzstyle{vertex}=[draw,circle,fill=black,minimum size=3,inner sep=0]

\draw[rounded corners=5] (-1.5,-0.75) rectangle (2,-0.25);
\node[vertex] at (-1.1,-0.5) {};
\node[vertex] at (-0.7,-0.5) {};
\node[vertex] at (-0.3,-0.5) {};
\node[vertex] at (0.1,-0.5) {};
\node[vertex] at (1.3,-0.5) {};
\node[vertex] at (1.7,-0.5) {};

\draw (-2.5,1.25) circle (0.20);
\draw (-1.9,1.25) circle (0.20);
\draw (-1.3,1.25) circle (0.20);
\draw (0.7,1.5) ellipse (0.55 and 0.45);
\draw (2,1.5) ellipse (0.55 and 0.45);
\draw (3.3,1.5) ellipse (0.55 and 0.45);
\draw[opacity=0.5] (-0.5,-0.5) ellipse (0.85 and 0.15);
\draw[opacity=0.5] (1.5,-0.5) ellipse (0.35 and 0.15);

\draw (-1.5,0) -- (-2.25,0.875);
\draw (-1,0) -- (-1.75,0.875);
\draw (-0.5,0) -- (-1.25,0.875);
\draw (1.125,0) -- (0.75,0.875);
\draw (1.375,0) -- (1.5,0.875);
\draw (1.625,0) -- (2.25,0.875);
\draw (1.875,0) -- (3,0.875);
q\draw[dashed] (-0.25,0) -- (0.125,0.875);
\draw[dashed] (0,0) -- (0.375,0.875);
\draw[dashed] (0.25,0) -- (0.625,0.875);

\node at (-0.75,-1.25) {$\displaystyle > \frac{n}{3}$ vertices};
\node at (-0.75,-2.25) {\begin{tabular}{c} $\displaystyle < \frac{n}{3} + 1$ \\ neighbors in $A$ \end{tabular}};
\node at (1.75,-1.25) {$< x$ vertices};
\node at (1.75,-2.25) {\begin{tabular}{c} $\displaystyle \le \frac{n}{2} - 3x$ \\ neighbors in $A$ \end{tabular}};
\end{tikzpicture}
\caption{Giving an upper bound for $e(A,B)$.}
\label{upper_bound}
\end{center}
\end{figure}

Now we show that $\frac{n}{2} - 3x > \frac{n}{3} + 1$. Intuitively this means that $e(A,B)$ is maximal if the components of size at most 2 have as few neighbors as possible. This is an easy corollary of the following claim.

\begin{claim} \label{average}
 For the vertices of $B$, the average number of neighbors in $A$ is more than $\frac{n}{3} + 1$.
\end{claim}
\begin{proof}
 It is already proved that the number of the edges between $A$ and $B$ is more than
 \begin{gather*}
  \frac{n^2}{9} + \frac{n}{3} + nx + x - 4 x^2 \text{.}
 \end{gather*}
 We need to show that
 \[ \frac{n^2}{9} + \frac{n}{3} + nx + x - 4 x^2 > |B|\left( \frac{n}{3} + 1 \right) = \left( \frac{n}{3} + x \right) \left( \frac{n}{3} + 1 \right) \text{.} \]
 Transforming it into equivalent forms, we can see that this inequality holds.
 \begin{align*}
  \frac{n^2}{9} + \frac{n}{3} + nx + x - 4 x^2 & > \frac{n^2}{9} +  \frac{n}{3} + \frac{n}{3} x + x \\
  \frac{2n}{3} x & > 4 x^2 \\
  \frac{n}{6} & > x
 \end{align*}
\end{proof}

If $\frac{n}{2} - 3x > \frac{n}{3}+1$ did not hold, then each vertex in $B$ could be adjacent to at most $\frac{n}{3}+1$ vertices in $A$, which contradicts Claim \ref{average}. So the number of the edges between $A$ and $B$ is less than
\[ \frac{n}{3} \cdot \left( \frac{n}{3} + 1 \right) + x \left( \frac{n}{2} - 3x \right) \text{,} \]
thus the proof of the upper bound is complete. 

\end{enumerate}
 
Clearly, the lower bound cannot be greater than the upper bound, so
\begin{align*}
 \frac{n^2}{9} + \frac{n}{3} + nx + x - 4 x^2 & < \frac{n}{3} \cdot \left( \frac{n}{3} + 1 \right) + x \left( \frac{n}{2} - 3x \right) \text{,} \\
 0 & < x^2 - \left( \frac{n}{2} + 1 \right) x \text{,} \\
 0 & < x \left[ x - \left( \frac{n}{2} + 1 \right) \right] \text{.}
\end{align*}

This contradicts the fact that $0 < x < \frac{n}{6}$.
\end{proof}

\textbf{Proof of Theorem \ref{fotetel}.}  Suppose to the contrary that
$\delta(G) > \frac{n}{3}+1$ and consider the edge
$f=ab$ guaranteed by Lemma \ref{work}. By Claim
\ref{min1toughlemma} there exist $k > \frac{n}{3}$ vertices, whose
removal from $G-f$ leaves $k+1$ connected components, see
Figure~\ref{main_idea}.

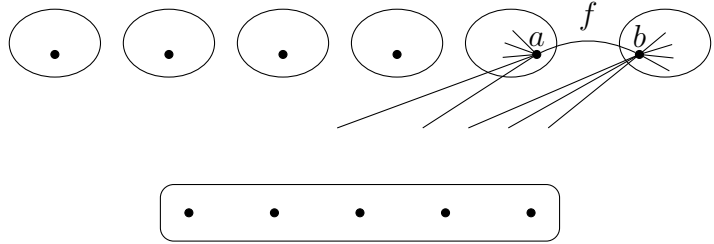
\begin{figure}[H]
\begin{center}
\begin{tikzpicture}[scale=1.5]
\tikzstyle{vertex}=[draw,circle,fill=black,minimum size=3,inner sep=0]

\draw[rounded corners=5] (-1.75,-0.25) rectangle (1.75,0.25);
\node[vertex] at (-1.5,0) {};
\node[vertex] at (-0.75,0) {};
\node[vertex] at (0,0) {};
\node[vertex] at (0.75,0) {};
\node[vertex] at (1.5,0) {};

\draw (-2.675,1.5) ellipse (0.40 and 0.3);
\draw (-1.675,1.5) ellipse (0.40 and 0.3);
\draw (-0.675,1.5) ellipse (0.40 and 0.3);
\draw (0.325,1.5) ellipse (0.40 and 0.3);
\draw (1.325,1.5) ellipse (0.40 and 0.3);
\draw (2.675,1.5) ellipse (0.40 and 0.3);

\node[vertex] at (-2.675,1.4) {};
\node[vertex] at (-1.675,1.4) {};
\node[vertex] at (-0.675,1.4) {};
\node[vertex] at (0.325,1.4) {};

\node[vertex] (a) at (1.55,1.4) [label={[label distance=-2.5]90:$a$}] {};
\node[vertex] (b) at (2.45,1.4) [label={[label distance=-3]90:$b$}] {};

\draw (a) to [bend left=25] node[pos=0.5, above] {$f$} (b);

\begin{scope}[shift={(a)}]
 \draw (0,0) -- (135:0.3);
 \draw (0,0) -- (160:0.3);
 \draw (0,0) -- (185:0.3);
 
 \draw (0,0) -- (-1.75,-0.65);
 \draw (0,0) -- (-1,-0.65);
\end{scope}

\begin{scope}[shift={(b)}]
 \draw (0,0) -- (40:0.3);
 \draw (0,0) -- (17:0.3);
 \draw (0,0) -- (-6:0.3);
 \draw (0,0) -- (-29:0.3);
 
 \draw (0,0) -- (-1.5,-0.65);
 \draw (0,0) -- (-1.15,-0.65);
 \draw (0,0) -- (-0.8,-0.65);
\end{scope}

\end{tikzpicture}
\caption{There are too many neighbors of $a$ and $b$.}
\label{main_idea}
\end{center}
\end{figure}

For this we need $k+1 >  \frac{n}{3} + 1$ independent vertices (one in each of  the $k+1$ components), two of them are $a$ and $b$, and the rest of them cannot be adjacent either to $a$ or to $b$. However, there are less than
\[ n - \left( \frac{2n}{3} - 1 \right) = \frac{n}{3} + 1 < k+1 \] such
vertices, since $a$ and $b$ have more than $\frac{2n}{3}-1$ different
neighbors. So $G-f$ is 1-tough, which is a contradiction. \hfill\qed

\section{Claw-free graphs} \label{section_clawfree}

In this section we prove that minimally 1-tough, claw-free graphs are just cycles (of length at least 4). By the following theorem, the toughness of claw-free graphs can be easily computed.

\begin{thm}[\cite{claw}] \label{clawfree}
 If $G$ is a noncomplete claw-free graph, then $2\tau(G) = \kappa(G)$.
\end{thm}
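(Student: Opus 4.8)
The plan is to prove the two inequalities $2\tau(G)\le\kappa(G)$ and $2\tau(G)\ge\kappa(G)$ separately. The inequality $2\tau(G)\le\kappa(G)$ holds for every noncomplete graph and needs no claw-freeness: it is precisely the bound $\kappa(G)\ge 2\tau(G)$ already noted in the introduction, coming from the fact that every $t$-tough graph is $2t$-connected. (Alternatively, taking a minimum cutset $S$ with $|S|=\kappa(G)$ and using that any cutset leaves at least two components gives $\tau(G)\le |S|/\omega(G-S)\le\kappa(G)/2$ at once.) All the content therefore lies in the reverse inequality $\tau(G)\ge\kappa(G)/2$.

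To prove $\tau(G)\ge\kappa(G)/2$ it suffices to show that every cutset $S$ satisfies $\omega(G-S)\le 2|S|/\kappa(G)$, since then $|S|/\omega(G-S)\ge\kappa(G)/2$ for every $S$, and minimizing over cutsets yields the claim. So I would fix a cutset $S$, write $C_1,\dots,C_\omega$ for the components of $G-S$ with $\omega=\omega(G-S)\ge 2$, and estimate the incidence count $\sum_{i=1}^{\omega}|N(C_i)|$ from both sides, where $N(C_i)\subseteq S$ denotes the set of vertices of $S$ adjacent to $C_i$.

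For the lower bound I would invoke $\kappa$-connectivity: for each $i$ the set $N(C_i)$ separates $C_i$ from $\bigcup_{j\ne i}C_j$, which is nonempty because $\omega\ge 2$, so $N(C_i)$ is a cutset and hence $|N(C_i)|\ge\kappa(G)$; summing gives $\sum_i|N(C_i)|\ge\kappa(G)\,\omega$. For the upper bound I would use claw-freeness in the form stressed earlier in the paper: a vertex $s\in S$ adjacent to three distinct components would, together with one chosen neighbor in each, induce a $K_{1,3}$, so each $s\in S$ is adjacent to at most two components. Rewriting $\sum_i|N(C_i)|=\sum_{s\in S}|\{\,i:s\in N(C_i)\,\}|\le 2|S|$ then gives $\kappa(G)\,\omega\le 2|S|$, i.e.\ $\omega\le 2|S|/\kappa(G)$, exactly as needed.

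The argument is short once both bounds are available, so I do not expect a serious obstacle; the one point deserving care is the lower bound, where I must verify that $N(C_i)$ is genuinely a cutset. This uses $\omega\ge 2$ to ensure that removing $N(C_i)$ leaves something on the far side of $C_i$ (the other components), together with the fact that $C_i$ is a full component of $G-S$, so that $N(C_i)\subseteq S$. The conceptual heart is the double counting: claw-freeness caps each cutset vertex's contribution at $2$, while connectivity forces each component to absorb at least $\kappa(G)$ vertices of the cutset, and these two constraints pin $\omega$ down at $2|S|/\kappa(G)$ in the extremal case.
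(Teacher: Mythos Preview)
Your argument is correct and is essentially the standard Matthews--Sumner double-counting proof. Note, however, that the paper does not present its own proof of this theorem: it is quoted as Theorem~10 of \cite{claw}, and the authors explicitly say ``which we do not present here.'' So there is no in-paper proof to compare against beyond the citation.

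That said, your approach is exactly the one underlying the cited result, and it has the pleasant feature that the equality analysis of your two bounds immediately yields Lemma~\ref{clawfreelemma} as well (which the paper says follows from the proof of Theorem~\ref{clawfree}): if $S$ is a tough set then $\omega(G-S)=2|S|/\kappa(G)$, forcing both inequalities $\kappa(G)\,\omega\le\sum_i|N(C_i)|\le 2|S|$ to be equalities; hence every component has exactly $\kappa(G)=2t$ neighbors in $S$, and every vertex of $S$ is adjacent to exactly two components. The only minor wording issue is in the lower bound: $N(C_i)$ need not be a cutset of $G$ in the literal sense if $G-N(C_i)$ happens to be connected, but what you actually need (and what holds) is that $N(C_i)$ separates $C_i$ from the nonempty set $\bigcup_{j\ne i}C_j$, so $|N(C_i)|\ge\kappa(G)$ by Menger's theorem or directly from the definition of $\kappa$. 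With that phrasing the argument is clean.
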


In our proof we need the following lemmas.

\begin{lemma} \label{clawfreelemma}
 Let $G$ be a claw-free graph with $\tau(G) = t$ and $S$ a tough set. Now the vertices of $S$ have neighbors in exactly two components of $G-S$, and the components of $G-S$ have exactly $2t$ neighbors (in $S$).
\end{lemma}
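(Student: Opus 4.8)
The plan is to prove the two assertions separately: first I pin down that every vertex of $S$ sees exactly two components, then I use a double-counting argument together with the connectivity of $G$ to force each component to have exactly $2t$ neighbors.

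For the first assertion I would show that every vertex of $S$ has neighbors in \emph{at most} two components (from claw-freeness) and in \emph{at least} two components (from the tightness of $S$). The upper bound is immediate: if some $v \in S$ had neighbors $u_1, u_2, u_3$ in three distinct components of $G-S$, then $u_1, u_2, u_3$ would be pairwise non-adjacent, since no edge joins distinct components of $G-S$, and hence $\{v, u_1, u_2, u_3\}$ would induce a claw, a contradiction. For the lower bound, suppose some $v \in S$ has neighbors in at most one component and set $S' := S \setminus \{v\}$. If $v$ has no neighbor in $G-S$, then $v$ becomes isolated in $G-S'$ and $\omega(G-S') = \omega(G-S)+1$; if $v$ has neighbors in exactly one component $C$, then $v$ merges into $C$ and $\omega(G-S') = \omega(G-S)$. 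In either case $S'$ is still a cutset and $\omega(G-S') \ge \omega(G-S) = |S|/t > |S'|/t$, contradicting $\tau(G)=t$. Thus every vertex of $S$ has neighbors in exactly two components.

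For the second assertion, write $c := \omega(G-S) = |S|/t$ and, for each component $C_i$, let $N_i \subseteq S$ be the set of vertices of $S$ with a neighbor in $C_i$. Since each vertex of $S$ lies in exactly two of the sets $N_i$, double counting yields $\sum_{i=1}^{c} |N_i| = 2|S| = 2tc$, so the average of the $|N_i|$ over the $c$ components equals $2t$. It then suffices to establish $|N_i| \ge 2t$ for every $i$, since equality must then hold throughout. Here I would invoke Theorem \ref{clawfree}: as $G$ is noncomplete and claw-free, $\kappa(G) = 2\tau(G) = 2t$. The key observation is that $N_i$ separates $C_i$ from the rest of $G$ (the only neighbors of $C_i$ lie in $N_i$), and because $c \ge 2$ there really is something on the other side, so $N_i$ is a cutset and therefore $|N_i| \ge \kappa(G) = 2t$. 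Combined with the averaging identity this forces $|N_i| = 2t$ for all $i$.

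Every step is short, but two points want care. The fiddly one is the lower bound in the first assertion, where in each sub-case I must check that $S'$ genuinely remains a cutset so that the toughness inequality applies at all. The conceptual core, and what I expect to be the real content, is the synthesis in the second assertion: the exact value $2t$ falls out only when one combines a global constraint (the average $2t$ from double counting, which uses the first assertion) with a uniform pointwise lower bound (each $|N_i| \ge 2t$ from the connectivity $\kappa(G) = 2t$).
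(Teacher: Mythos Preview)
Your argument is correct. The paper does not give its own proof here; it only remarks that the lemma can be read off from the Matthews--Sumner proof of Theorem~\ref{clawfree} in~\cite{claw}, and what you wrote is exactly the natural self-contained version of that extraction: claw-freeness gives ``at most two components per vertex of $S$'', tightness of $S$ gives ``at least two'', and then the double count $\sum_i |N_i| = 2|S| = 2t\,\omega(G-S)$ together with the cutset bound $|N_i|\ge \kappa(G)=2t$ (the latter via Theorem~\ref{clawfree}) forces $|N_i|=2t$ for every component. The one edge case you flag but do not close is $|S|=1$, where $S'=\emptyset$ is not a cutset; but there the assumption that $v$ meets at most one component of $G-S$ would leave $G$ itself disconnected, contradicting $\tau(G)=t>0$, so this case cannot occur.
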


Lemma \ref{clawfreelemma} follows from the proof of Theorem \ref{clawfree}, which we do not present here, it can be found as Theorem $10$ in \cite{claw}.

\begin{lemma} \label{no2degtriangle}
 If $G$ is a minimally $1$-tough graph, then every vertex of any triangle has degree at least 3.
\end{lemma}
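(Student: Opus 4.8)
The plan is to argue by contradiction: assume some vertex $v$ of a triangle $uvw$ has degree $2$, so that $N(v)=\{u,w\}$ and $uw\in E(G)$, and then show that $G-uw$ is still $1$-tough, contradicting the minimality of $G$. To this end I would apply Claim \ref{min1toughlemma} to the edge $e=uw$, obtaining a set $S=S(uw)$ with $\omega(G-S)=|S|$ and $\omega\big((G-uw)-S\big)=|S|+1$. Since $e=uw$ is the edge whose removal increases the number of components, its endpoints $u$ and $w$ lie outside $S$ and fall into two distinct components $C_u\ni u$ and $C_w\ni w$ of $(G-uw)-S$.

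The key observation is that $v$ must belong to $S$. Indeed, in $G-uw$ the vertex $v$ still retains both of its edges $uv$ and $vw$, so if $v\notin S$, then $v$ would be adjacent to both $C_u$ and $C_w$, merging them into a single component and contradicting the fact that they are distinct. Having placed $v$ in $S$, I would then consider the smaller set $S':=S\setminus\{v\}$ of size $|S|-1$. Restoring $v$ to $(G-uw)-S$ reconnects only $C_u$ and $C_w$, since $N(v)=\{u,w\}$, so $(G-uw)-S'$ has exactly $|S|$ components; and as $u,w$ are now in a common component, restoring the edge $uw$ as well changes nothing, giving $\omega(G-S')=|S|=|S'|+1$. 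Thus $S'$ is a cutset of $G$ with $\omega(G-S')>|S'|$, contradicting $\tau(G)=1$.

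The step requiring extra care, and the main obstacle, is the degenerate case $|S|=1$, where $S'=\emptyset$ and the toughness inequality does not apply directly. Here $S=\{v\}$ and $(G-uw)-v$ splits into exactly two parts $X\ni u$ and $Y\ni w$ whose only mutual links in $G$ are the edge $uw$ and the path $u$–$v$–$w$. I would dispose of this case using the fact that a $1$-tough graph is $2$-connected: examining the set $\{u,w\}$ shows that if both $X\setminus\{u\}$ and $Y\setminus\{w\}$ are nonempty, then $v$ is isolated and the two sides are separated in $G-\{u,w\}$, so $\omega(G-\{u,w\})\ge 3>2$, contradicting $1$-toughness; while if, say, $X=\{u\}$, then $u$ has degree $2$ and the third triangle vertex $w$ becomes a cut vertex of $G$ (or $G=K_3$, whose toughness is not $1$), contradicting $\kappa(G)\ge 2$. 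In every case a contradiction arises, so no vertex of a triangle can have degree $2$.
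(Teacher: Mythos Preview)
Your proof is correct and follows essentially the same approach as the paper's: apply Claim~\ref{min1toughlemma} to the edge opposite the degree-$2$ vertex, observe that this vertex must lie in $S$, and then remove it from $S$ to obtain a contradiction with $1$-toughness. Your treatment is in fact more careful than the paper's, which compresses the argument to the single line $|S|=\omega(G-S)=\omega\big(G-(S\setminus\{u\})\big)\le|S|-1$ without isolating the case $|S|=1$ (where $S\setminus\{u\}=\emptyset$ is not a cutset and the toughness inequality is not directly available); your separate handling of that degenerate case via $2$-connectivity closes this small gap.
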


\begin{proof}
Suppose to the contrary that $\{u,v,w\}$ is a triangle and $u$ has degree 2. Let $e$ be the edge connecting $v$ and $w$. By Claim \ref{min1toughlemma} there exists a vertex set $S$ such that $\omega(G-S)=|S|$ and $\omega \big( (G-e)-S \big)=|S|+1$. Clearly, $u \in S$ and $v,w \not\in S$. Since the neighbors of $u$ are adjacent, and $G$ is 1-tough
\[ |S| = \omega(G-S) = \omega \big( G - (S \setminus \{ u \}) \big) \le |S| - 1 \text{,} \]
which is a contradiction.
\end{proof}

\bigskip

\textbf{Proof of Theorem \ref{clawfree-sajat}.}
 Suppose to the contrary that $G$ has a vertex of degree at least $3$. Since $G$ is claw-free, some neighbors of this vertex must be connected, hence there must be a triangle in $G$. Let us denote the vertices of this triangle by $\{u,v,w\}$. 
 
 \begin{claim} \label{haromszog_mintoughsetjei}
  For some edge of the triangle $\{u,v,w\}$, the vertex set guaranteed by Claim \ref{min1toughlemma} has size at least two.
 \end{claim}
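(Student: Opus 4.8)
The plan is to argue by contradiction, assuming that for every one of the three edges $uv$, $vw$, $uw$ of the triangle the minimal witnessing set from Claim \ref{min1toughlemma} has size exactly $1$, i.e. $k(uv)=k(vw)=k(uw)=1$. The first step is to pin down these size-$1$ sets. If $\{s\}$ witnesses the edge $uv$, then $G-s$ is connected while $(G-uv)-s$ splits into two components joined only by $uv$, so $u$ and $v$ lie in different components; since $w$ is adjacent to both $u$ and $v$, any placement of $w$ other than $s$ would reconnect the two sides, forcing $s=w$. Hence the three witnesses are forced to be $\{w\},\{u\},\{v\}$, and each yields a bridge picture: for instance in $G-w$ the edge $uv$ is a bridge separating a side $A\ni u$ from a side $B\ni v$, where the only edge of $G$ between $A$ and $B$ is $uv$.

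Next I would bring in the two facts we are allowed to use: every vertex of a triangle has degree at least $3$ (Lemma \ref{no2degtriangle}), and $G$ is claw-free. The heart of the argument is a local claim: every neighbour of $w$ lying in $A\setminus\{u\}$ must be adjacent to $u$. To prove it I would suppose some neighbour $a\in A\setminus\{u\}$ of $w$ satisfies $a\not\sim u$; then claw-freeness at $w$ (using that the only edge between $A$ and $B$ is $uv$) forces $N(w)\cap B=\{v\}$, after which the connectivity of $G-v$ guaranteed by the witness $\{v\}$ for $uw$ collapses $B$ to $\{v\}$ and makes $\deg(v)=2$, a contradiction. A symmetric statement holds on the $B$-side, using instead the witness $\{u\}$ for $vw$.

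Finally, since $\deg(w)\ge 3$ and $N(w)\subseteq A\cup B$ with $u\in A$, $v\in B$, one side must carry a second neighbour of $w$; say $a\in A\setminus\{u\}$ with $a\sim w$. By the local claim $a\sim u$, so $\{u,w,a\}$ is a triangle with $a\neq u,v,w$. I would then confront $a$ with the bridge picture of the edge $uw$: in $G-v$ that edge is a bridge with sides $C\ni u$ and $D\ni w$ whose only connecting edge is $uw$. Because $a\sim w$, the vertex $a$ must lie in $D$; but then $a\sim u$ produces a second $C$-$D$ edge, the final contradiction. This rules out the assumption and proves that some edge of the triangle has a witness of size at least two.

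The main obstacle is the middle step. Naively, claw-freeness plus the degree bound does not forbid the configuration directly, because a neighbour of $w$ that \emph{is} adjacent to $u$ merely creates a new triangle rather than a claw. The trick that makes everything close up is to feed this unavoidable extra triangle back into the bridge structure of a \emph{different} edge of the original triangle; the three bridge pictures turn out to be mutually incompatible once the degree and claw-free conditions are imposed. Along the way one must check that each deletion $G-u$, $G-v$, $G-w$ is genuinely connected, which is precisely what the size-$1$ witnesses provide through Claim \ref{min1toughlemma}.
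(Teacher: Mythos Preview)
Your argument is correct, but it takes a considerably longer path than the paper's. Both proofs start the same way: assuming each witness has size~$1$ forces $S(vw)=\{u\}$, $S(uw)=\{v\}$, $S(uv)=\{w\}$, and each yields a ``bridge picture'' with two components. From here the paper proceeds more directly. Fixing the decomposition $L_1\ni v$, $L_2\ni w$ of $(G-vw)-u$, one observes that the only $L_1$--$L_2$ edge in $G$ is $vw$; hence the two components of $(G-uw)-v$ are forced to be exactly $L_2$ and $(L_1\setminus\{v\})\cup\{u\}$, which immediately gives $N(u)\cap(L_2\setminus\{w\})=\emptyset$. The symmetric comparison with $(G-uv)-w$ gives $N(u)\cap(L_1\setminus\{v\})=\emptyset$, so $N(u)=\{v,w\}$ and Lemma~\ref{no2degtriangle} finishes.

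Your route instead invokes claw-freeness to control $N(w)$, produces an auxiliary triangle $\{u,w,a\}$, and only then plays it off against the bridge picture of $uw$. This works, and the claw argument you sketch (the potential claw is $w;a,u,b$ for any $b\in N(w)\cap(B\setminus\{v\})$) is valid. But note that your final contradiction---the edge $au$ crossing the $C$/$D$ split of $(G-uw)-v$---is precisely the observation the paper exploits at the outset, applied to $u$ rather than to $w$. In effect you reach the same incompatibility of the three bridge pictures, just after a detour. The paper's version is shorter and, notably, never uses claw-freeness; the claim therefore holds for \emph{any} minimally $1$-tough graph containing a triangle, which is worth knowing.
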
 
 \begin{proof}
 
 Suppose to the contrary that for each edge the corresponding vertex set has size 1. Thus for each edge this set must consist of the third vertex of the triangle, i.e.~for the edges $e_1 = vw$, $e_2 = uw$ and $e_3 = uv $, these sets are $S_1 := S(e_1) = \{ u \}$, $S_2 := S(e_2) = \{ v \}$ and $S_3 := S(e_3) = \{ w \}$.
 
 Let $L_1$ and $L_2$ denote the connected components of $(G-e_1) - S_1$ containing $v$, $w$ respectively. Now the components of $(G-e_2)-S_2$ must be $L_2$ and $\big( L_1 - \{ v \} \big) \cup \{ u \}$, and the components of $(G-e_3)-S_3$ must be $L_1$ and $\big( L_2 - \{ w \} \big) \cup \{ u \}$. So $u$ cannot have any neighbors in $L_2 - \{ w \}$ and in $L_1 - \{ v \}$, see Figure~\ref{egerek}. This means that $u$ has only two neighbors $v$ and $w$, which is a contradiction by Lemma \ref{no2degtriangle}.
 
 \begin{figure}[H]
 \begin{center}
 \begin{tikzpicture}[scale=2]
 \tikzstyle{vertex}=[draw,circle,fill=black,minimum size=3,inner sep=0]
 
 \node[vertex] (u) at (-1,0) [label={[label distance=-3]270:$u$}] {};
 
 \draw (-2,1) ellipse (0.6 and 0.3);
 \draw (0,1) ellipse (0.6 and 0.3);
 
 \node at (-2,1.5) {$L_1$};
 \node at (0,1.5) {$L_2$};
 
 \node[vertex] (v) at (-1.625,1) [label={[label distance=-2.5]180: $v$}] {};
 \node[vertex] (w) at (-0.375,1) [label={[label distance=-3]0:$w$}] {};
 
 \draw (v) to [bend left=25] node[pos=0.5, above] {$e_1$} (w);
 
 \draw (u) -- (w);
 \draw (u) -- (v);
 \draw (u) -- ($(u)!0.6!(0,1)$);
 \draw (u) -- ($(u)!0.6!(0.5,1)$);
 \draw (u) -- ($(u)!0.6!(-2,1)$);
 \draw (u) -- ($(u)!0.6!(-2.5,1)$);
 
 \begin{scope}[shift={(0,-2.5)}]
 \node[vertex] (v) at (-1,0) [label={[label distance=-3]270:$v$}] {};
 
 \draw (-2,1) ellipse (0.6 and 0.3);
 \draw (0,1) ellipse (0.6 and 0.3);
 
 \node at (-2,1.5) {$\big( L_1 - \{v\} \big) \cup \{u\}$};
 \node at (0,1.5) {$L_2$};
 
 \node[vertex] (u) at (-1.625,1) [label={[label distance=-2.5]180: $u$}] {};
 \node[vertex] (w) at (-0.375,1) [label={[label distance=-3]0:$w$}] {};
 
 \draw (u) to [bend left=25] node[pos=0.5, above] {$e_2$} (w);
 
 \draw (u) -- (v);
 \draw (w) -- (v);
 \draw (v) -- ($(v)!0.6!(0,1)$);
 \draw (v) -- ($(v)!0.6!(0.5,1)$);
 \end{scope}
 \end{tikzpicture}
 \caption{The vertex $u$ cannot have any neighbors in $L_2 - \{w\}$.}
 \label{egerek}
 \end{center}
 \end{figure}
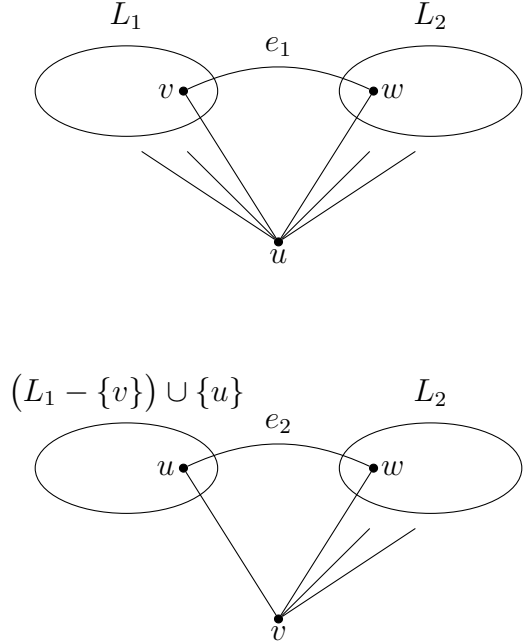
 \end{proof}
 
 By Claim \ref{haromszog_mintoughsetjei} we can assume that for the edge $e = uw$, the vertex set $S = S(e)$ garanteed by Claim \ref{min1toughlemma} has size at least 2. This means that $S$ is a cutset. Since $\omega(G)=|S|$, $S$ is a tough set. So by Lemma \ref{clawfreelemma} the component of $G-S$ that contains the edge $e$ has exactly two neighbors in $S$. One such neighbor must be $v$, and let us denote the other neighbor by $v_2$. Observe that the set $\{v,v_2\}$ is a tough set. Let $L_1, L_2$ denote the connected components of $(G-e)-\{v,v_2\}$ containing $u,w$ respectively and let $L_3$ denote the third connected component, see Figure \ref{3eyedalien}.
 
  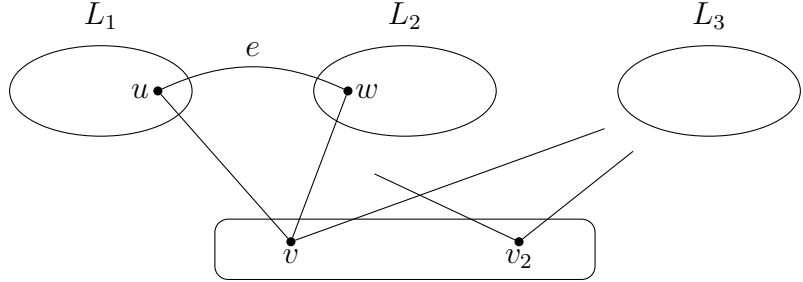
\begin{figure}[H]
 \begin{center}
 \begin{tikzpicture}[scale=2]
 \tikzstyle{vertex}=[draw,circle,fill=black,minimum size=3,inner sep=0]
 
 \draw[rounded corners=5] (-1.25,-0.25) rectangle (1.25,0.15);
 \node[vertex] (v) at (-0.75,0) [label={[label distance=-3]270:$v$}] {};
 \node[vertex] (v2) at (0.75,0) [label={[label distance=-3]270:$v_2$}] {};
 
 \draw (-2,1) ellipse (0.6 and 0.3);
 \draw (0,1) ellipse (0.6 and 0.3);
 \draw (2,1) ellipse (0.6 and 0.3);
 
 \node at (-2,1.5) {$L_1$};
 \node at (0,1.5) {$L_2$};
 \node at (2,1.5) {$L_3$};
 
 \node[vertex] (u) at (-1.625,1) [label={[label distance=-2.5]180: $u$}] {};
 \node[vertex] (w) at (-0.375,1) [label={[label distance=-3]0:$w$}] {};
 
 \draw (u) to [bend left=25] node[pos=0.5, above] {$e$} (w);
 
 \draw (u) -- (v);
 \draw (w) -- (v);
 \draw (v) -- ($(v)!0.75!(2,1)$);
 \draw (v2) -- (-0.2,0.45);
 \draw (v2) -- ($(v2)!0.6!(2,1)$);
 \end{tikzpicture}
 \caption{The tough set $\{ v, v_2 \}$ and the sets $L_1, L_2, L_3$.}
 \label{3eyedalien}
 \end{center}
 \end{figure}
 
 \textit{Case 1:} both $L_1$ and $L_2$ have size at least 2.
 
 Now $\{ v, v_2, u \}$ is a tough set. Using Lemma \ref{clawfreelemma} we can conclude that $v_2$ has no neighbors in $L_2$ (since $v$ and $u$ have neighbors in $L_2$), so $v_2$ must have neighbors in $L_1 - \{ u \}$, see Figure~\ref{L1>1}. Using the same argument for the tough set $\{ v, v_2, w \}$, we can conclude that $v_2$ has neighbors in $L_2 - \{ w \}$. Then there is a claw in the graph (it is formed by $v_2$ and one of its neighbors in each of the components $L_1 - \{ u \}$, $L_2 - \{ w \}$ and $L_3$), which is a contradiction. So we can assume that $L_1 = \{ u \}$.
 
 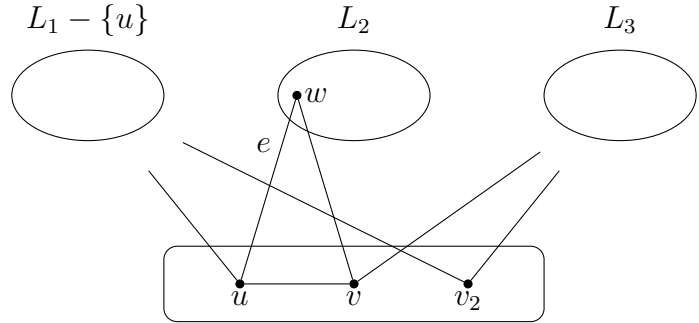
\begin{figure}[H]
 \begin{center}
 \begin{tikzpicture}[scale=2]
 \tikzstyle{vertex}=[draw,circle,fill=black,minimum size=3,inner sep=0]
 
 \draw[rounded corners=5] (-1.25,-0.5) rectangle (1.25,0);
 \node[vertex] (u) at (-0.75,-0.25) [label={[label distance=-3]270:$u$}] {};
 \node[vertex] (v) at (0,-0.25) [label={[label distance=-3]270:$v$}] {};
 \node[vertex] (v2) at (0.75,-0.25) [label={[label distance=-3]270:$v_2$}] {};
 
 \draw (-1.75,1) ellipse (0.5 and 0.3);
 \draw (0,1) ellipse (0.5 and 0.3);
 \draw (1.75,1) ellipse (0.5 and 0.3);
 
 \node at (-1.75,1.5) {$L_1 - \{ u \}$};
 \node at (0,1.5) {$L_2$};
 \node at (1.75,1.5) {$L_3$};
 
 \node[vertex] (w) at (-0.375,1) [label={[label distance=-3]0:$w$}] {};
 
 \draw (u) to node[pos=0.75, left] {$e$} (w);
 
 \draw (u) -- (v);
 \draw (w) -- (v);
 \draw (v2) -- ($(v2)!0.75!(-1.75,1)$);
 \draw (u) -- ($(u)!0.6!(-1.75,1)$);
 \draw (v2) -- ($(v2)!0.6!(1.75,1)$);
 \draw (v) -- ($(v)!0.7!(1.75,1)$);
 \end{tikzpicture}
 \caption{If $|L_1| > 1$.}
 \label{L1>1}
 \end{center}
 \end{figure}
 
 \textit{Case 2:} $L_2$ has size at least 2 (and $L_1 = \{ u \}$).
 
 Now $\{ v, v_2, w \}$ is a tough set, so by Lemma \ref{clawfreelemma} $v_2$ is not adjacent to $u$, so $u$ is a vertex of degree 2, which contradicts Lemma \ref{no2degtriangle}.
 
 \textit{Case 3:} $L_2 = \{ w \}$ (and $L_1 = \{ u \}$).
 
 By Lemma \ref{no2degtriangle}, $N(u) = \{ w, v, v_2 \}$ and $N(w) = \{ u, v, v_2 \}$. Consider the edge $f = uv$ and let $S' := S(f)$ be a vertex set garanteed by Claim~\ref{min1toughlemma}. Clearly, $w \in S'$, and by Lemma \ref{clawfreelemma} $w$ has neighbors in exactly two components of $G-S'$. By the choice of $S'$, 
 the vertices $u$ and $v$ are in the same component in $G-S'$, so $v_2$ must be in a different component, which contradicts the fact that $u$ and $v_2$ are adjacent. \hfill \qed

\section{Embedding graphs into a minimally $t$-tough graph} \label{section_beagyazas}

In this section we show that for every positive rational number $t$, any graph can be embedded as an induced subgraph into a minimally $t$-tough graph. Our proof is constructive. Different constructions are used for $t \geq 1$ and $t < 1$. For this we need a definition and the following well-known exercises from \cite{lovasz}.

\begin{defi}
 A graph $G$ is called $\alpha$-critical, if $\alpha(G-e) > \alpha(G)$ for all $e \in E(G)$.
\end{defi}

\begin{lemma}[Problem $13$ of §8 in \cite{lovasz}] \label{alphacritical}
Every graph can be embedded as an induced subgraph into an $\alpha$-critical graph. 
\end{lemma}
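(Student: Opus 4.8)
The plan is to work from a local reformulation of $\alpha$-criticality and then to build the host graph by bolting gadgets onto the given graph $H$, keeping $H$ induced. First I would record the following criterion, which is immediate from the definitions: an edge $uv$ of a graph $G$ satisfies $\alpha(G-uv)>\alpha(G)$ if and only if $G$ has a maximum independent set $M$ with $u\in M$ whose only element adjacent to $v$ is $u$; equivalently, the set of vertices that are neither $u$, nor $v$, nor adjacent to either of them contains an independent set of size $\alpha(G)-1$. Thus $G$ is $\alpha$-critical exactly when every edge admits such a \emph{swap witness}. This turns the task into: attach new vertices to $H$ so that every edge---both the old edges of $H$ and the newly created ones---acquires a swap witness, while $H$ stays induced (so no edge may be added inside $V(H)$, and new vertices may only be joined to old ones).

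The construction I would try attaches to each vertex $v_i$ of $H$ a small private gadget (for instance a path or an odd-cycle fragment whose independence number is a fixed constant $c$), chosen so that the gadget edges are automatically critical and so that $\alpha(G)$ does not depend on the internal structure of $H$. The intended effect is that some maximum independent set may avoid $V(H)$ entirely, which gives freedom in producing swap witnesses for the original edges: for an edge $v_iv_j$ one wants to include $v_i$, exclude $v_j$, and fill every other index from its surviving gadget. It is also convenient to join in a large clique (equivalently, in the complement, to pass to $\bar H \vee K_m$), so that the clique edges are critical and the independence number is pinned down from above.

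The step I expect to be the real obstacle is matching the independence budget exactly. Deleting the neighbourhoods of both endpoints of an edge $v_iv_j$ destroys part of the gadget of $v_i$ and part of the gadget of $v_j$, and a short argument shows each such deletion costs at least one unit of independence; so a naive symmetric gadget loses at least two, whereas a swap witness is allowed to lose only one. Overcoming this requires an asymmetric or shared gadget, or gadgets attached to edges rather than vertices---and the moment one adds vertices to repair a bad edge, those vertices create fresh non-adjacent pairs that must themselves become swap-critical, raising the danger of an infinite cascade. The crux of the proof is therefore a global argument---most naturally an induction on the number of non-critical edges, or a single highly symmetric construction (cleanest to phrase in the complement, where one must ensure that every non-edge lies in a common clique of size $\omega-1$)---guaranteeing that the repairs terminate and that no repair undoes another. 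Once such a construction is in hand, checking that $H$ remains induced and that the independence count works out is routine.
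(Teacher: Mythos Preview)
The paper gives no proof of this lemma: it is quoted as Problem~13 of \S8 in Lov\'asz's \emph{Combinatorial Problems and Exercises} and used as a black box, so there is no argument in the paper to compare yours against.

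Judged on its own, your write-up is a plan rather than a proof, and it stops precisely at the step that carries all the content. Your local reformulation is correct---an edge $uv$ is $\alpha$-critical iff $V(G)\setminus(N[u]\cup N[v])$ contains an independent set of size $\alpha(G)-1$---and you correctly isolate the obstruction: a symmetric per-vertex gadget loses independence at \emph{both} endpoints of an old edge, leaving you one short of a swap witness, while any local repair introduces fresh edges whose criticality must in turn be secured, threatening an infinite regress. From that point on, however, you only list candidate remedies (asymmetric or edge-based gadgets, induction on the number of non-critical edges, the dual clique formulation in the complement) without executing any of them, and you close with ``once such a construction is in hand, checking \ldots\ is routine.'' That sentence is the gap: producing the construction, together with a proof that the repair process terminates without undoing earlier repairs, \emph{is} the lemma. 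None of your suggested routes comes with a termination argument, and the inductive one in particular needs a monovariant you have not supplied---adding vertices can increase the number of non-critical edges, so the naive induction does not close.
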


\begin{lemma}[Problem $14$ of §8 in \cite{lovasz}] \label{blowup}
If we replace a vertex of an $\alpha$-critical graph with a clique, and connect every neighbor of the original vertex with every vertex in the clique, then the resulting graph is still $\alpha$-critical. 
\end{lemma}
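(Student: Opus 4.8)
The plan is to let $G$ be $\alpha$-critical, let $v \in V(G)$ be the vertex we blow up, and let $H$ be the graph obtained by replacing $v$ with a clique $Q = \{w_1, \dots, w_q\}$ in which each $w_i$ is joined to every vertex of $N(v)$. First I would pin down the independence number of $H$. Since $Q$ is a clique, any independent set of $H$ meets $Q$ in at most one vertex; and if it contains some $w_i$, then it avoids all of $N(v)$, so replacing $w_i$ by $v$ yields an independent set of $G$ of the same size. Conversely a maximum independent set of $G$ either avoids $v$ (and is already independent in $H$) or contains $v$ (and becomes independent in $H$ after swapping $v$ for any single $w_i$). This two-way translation gives $\alpha(H) = \alpha(G)$. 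Because deleting one edge can raise the independence number by at most one, proving $H$ is $\alpha$-critical amounts to exhibiting, for every edge $f$ of $H$, an independent set of size $\alpha(G)+1$ in $H - f$.

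The one structural fact I would establish first is that \emph{every vertex of an $\alpha$-critical graph lies in some maximum independent set}. If $v$ is isolated this is immediate. Otherwise pick a neighbor $u$ of $v$; by $\alpha$-criticality $\alpha(G - uv) = \alpha(G)+1$, and any witnessing independent set $I$ of that size must use the deleted non-edge, i.e.~contain both $u$ and $v$ (otherwise $I$ would already be independent in $G$). Then $I \setminus \{u\}$ is a maximum independent set of $G$ containing $v$. I would record this for use on the clique edges.

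Next I would run a three-case analysis over the edge types of $H$. Edges inside $V(G) \setminus \{v\}$ and edges of the form $w_i x$ with $x \in N(v)$ both descend from edges of $G$ (namely $xy$ and $vx$ respectively), so for these I would take the witnessing independent set $I$ of size $\alpha(G)+1$ guaranteed by $\alpha$-criticality of $G$ for the corresponding edge and transport it into $H$, swapping $v$ for an appropriate $w_i$ when $v \in I$ and checking that the only adjacency that could fail is exactly the deleted edge. For an edge $w_i w_j$ inside $Q$ I would instead start from a maximum independent set of $G$ containing $v$ (available by the fact above), delete $v$ from it, and add back both $w_i$ and $w_j$; these two vertices are nonadjacent in $H - w_i w_j$ and, since the set contained no neighbor of $v$, they are nonadjacent to the rest, giving an independent set of size $\alpha(G)+1$.

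The routine part is the bookkeeping in the three cases; the only genuinely load-bearing ingredient is the lemma that every vertex of an $\alpha$-critical graph sits in a maximum independent set, which is what makes the intra-clique edges behave and is where the criticality hypothesis is really used.
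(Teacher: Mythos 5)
Your proof is correct. The paper itself gives no argument for this lemma --- it is quoted as Problem 14 of \S 8 of Lov\'asz's exercise book --- so there is nothing in-paper to diverge from; your route (establishing $\alpha(H)=\alpha(G)$, proving that every vertex of an $\alpha$-critical graph lies in some maximum independent set via the forced presence of both endpoints of a deleted edge in any witnessing independent set, and then splitting the edges of $H$ into the three types: inherited edges, clique-to-$N(v)$ edges, and intra-clique edges) is exactly the standard solution, the case analysis is exhaustive, and each transported independent set verifies as claimed.
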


Now we proceed with the proof of the case $t \geq 1$. 

\begin{thm} \label{t>1}
For every positive rational number $t \ge 1$, any graph $G$ can be embedded as an induced subgraph into a minimally $t$-tough graph.
\end{thm}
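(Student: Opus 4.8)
The plan is to give a constructive embedding. Starting from an arbitrary graph $G$ on $m$ vertices, I want to build a minimally $t$-tough graph containing $G$ as an induced subgraph. The two tools provided are Lemma \ref{alphacritical} (embed $G$ into an $\alpha$-critical graph $H$) and Lemma \ref{blowup} (blowing up a vertex of an $\alpha$-critical graph into a clique preserves $\alpha$-criticality). The natural strategy is: first embed $G$ into an $\alpha$-critical graph $H$ via Lemma \ref{alphacritical}; then use clique-blowups to inflate the graph so that its toughness becomes exactly the prescribed rational $t$; and finally add a controlled set of ``attachment'' vertices (or join the construction to a clique) so that the toughness is pinned to $t$ and every edge becomes critical for toughness.

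The reason $\alpha$-criticality is the right intermediate notion is the standard link between toughness and independence. For a graph $F$, a tough set $S$ gives components $\omega(F-S)$, and deleting an edge $e$ can only increase toughness-relevant quantities through the independence/component structure; $\alpha$-critical means \emph{every} edge deletion strictly increases the independence number, i.e.\ every edge is ``used up'' by some maximum independent set. The first concrete step I would carry out is to write $t = p/q$ with $p \ge q$ (since $t \ge 1$) and design the blowup sizes so that a canonical cutset $S$ yields $\omega(F-S) = |S|/t$ exactly, forcing $\tau(F) = t$. Here I expect to take the independent set witnessing $\alpha(H)$ and blow up vertices so that the ratio $|S|/\omega(F-S)$ equals $t$ on the nose; the clique-blowups from Lemma \ref{blowup} keep the graph $\alpha$-critical while scaling these counts.

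The key verification has two halves. First, \textbf{toughness equals $t$}: I would exhibit one cutset achieving the ratio $|S|/\omega(F-S) = t$ (giving $\tau(F) \le t$), and then argue $\tau(F) \ge t$ by showing that for \emph{every} cutset $S'$ the number of components is at most $|S'|/t$. The lower bound is where I would lean on $\alpha$-criticality of the underlying structure together with the high connectivity created by the clique-blowups, since large cliques are hard to disconnect and contribute many neighbors to each component. Second, \textbf{minimality}: for each edge $e$ I must produce a set whose removal witnesses $\tau(F-e) < t$, in the spirit of Claim \ref{min1toughlemma}. This is exactly where $\alpha$-criticality pays off: deleting $e$ increases $\alpha$, which should let me enlarge an independent transversal of components and thereby increase $\omega(F-S)$ relative to $|S|$ for an appropriate $S$.

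The \textbf{main obstacle} I anticipate is the toughness \emph{lower} bound $\tau(F) \ge t$ — i.e.\ ruling out a cutset that disconnects $F$ into \emph{too many} components relative to its size. The blowups must be chosen large enough that no cutset beats the intended ratio, and one has to check that cutting through the blown-up cliques is never advantageous (removing a clique is expensive relative to the components it isolates). A secondary delicate point is ensuring that \emph{every} edge is critical, including edges internal to the blown-up cliques and the added attachment edges, not merely the edges inherited from $H$; handling the clique-internal edges may require the cliques to be attached so that each such edge lies on the boundary of some achievable tough configuration. Balancing these size constraints against the exact rational value $t = p/q$ is the crux, and I would expect the construction to hinge on a clean choice of blowup multiplicities expressed in terms of $p$ and $q$.
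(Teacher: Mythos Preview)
Your plan has the right ingredients --- reduce to an $\alpha$-critical host, exploit the link between independence number and components of a tough set, and use large cliques to control connectivity --- but it is missing one key idea that the paper uses to sidestep exactly the ``secondary delicate point'' you flag. You do \emph{not} need the constructed graph $H$ to be minimally $t$-tough; you only need $\tau(H)=t$ and $\tau(H-e)<t$ for every edge $e$ \emph{of the induced copy of $G$}. Once that is established, repeatedly deleting any edge of $H$ whose removal does not drop the toughness terminates in a minimally $t$-tough graph in which all edges of $G$ have survived, so $G$ is still an induced subgraph. This greedy-deletion trick eliminates your worry about making the clique-internal edges and the attachment edges critical; that part of your outline is pointing at a difficulty that is simply avoided.

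There is also a substantive difference in the construction itself. The paper does \emph{not} blow up vertices of the $\alpha$-critical host to tune toughness. Lemma~\ref{blowup} is invoked only for the bookkeeping purpose of arranging that $b \mid n$ (so that $(t-1)n+\alpha(G)$ is an integer). The actual construction attaches to each vertex $v_i$ of the host a fresh clique $U_i$ of size $N$ with $N>t\,\alpha(H)$, and joins every $U_i$ completely to a common independent set $W$ of size $(t-1)n+\alpha(G)$. The role of the huge cliques $U_i$ is that no minimal counterexample cutset can afford to contain an entire $U_i$; this forces any candidate cutset to contain all of $W$ and reduces the toughness lower bound to an inequality in one integer variable (how many $v_i$ lie in the cutset). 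Your proposal to ``blow up vertices so that the ratio $|S|/\omega(F-S)$ equals $t$'' is a different and vaguer mechanism, and it is not clear it can be made to give both the upper and lower toughness bounds simultaneously without something like the hub set $W$.
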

\begin{proof}
Let $n$ denote the number of vertices in $G$ and let $a,b \in \mathbb{N}$ be such that $t=a/b$. By Lemma \ref{alphacritical} it is enough to consider the case where $G$ is $\alpha$-critical. By Lemma \ref{blowup} we can also assume that $b$ divides $n$. 

Our strategy is to embed $G$ as an induced subgraph into a graph $H$, which is not necessarily minimally $t$-tough yet, but has the following two properties: $\tau(H) = t$ and deleting any edge of the induced subgraph of $H$ that is isomorphic to $G$, lowers the toughness of $H$. Then if we repeatedly remove an edge from $H$ that does not lower the toughness, the remaining graph will be minimally $t$-tough (since no further edges can be removed), and the edges corresponding to the subgraph $G$ are left intact. We define $H$ as follows. Let $N$ be a large integer to be specified later. Let 
$$V:= \{v_1, \ldots, v_n\} \text{,} $$
$$ W:= \{w_1, \ldots, w_{(t-1)n + \alpha(G)} \} $$
and for each $i\in [n]$ let
$$ U_i:= \{u_{i,1}, \ldots, u_{i,N} \}. $$
Then let
\[ U:= \bigcup_{i=1}^{n} U_i \]
and
$$V(H):= V \cup U \cup W. $$
Place the graph $G$ on the vertices of $V$. For all $i \in [n]$ place a clique on $U_i$ and connect every vertex of $U_i$ to every vertex of $W$ and also to the vertex $v_i$. Note that $\alpha(H)$ does not depend on $N$, so let us choose $N$ such that $N > t \alpha(H)$. Also note that $(t-1)n + \alpha(G)$ is an integer since $b$ divides $n$. See Figure~\ref{random1}.

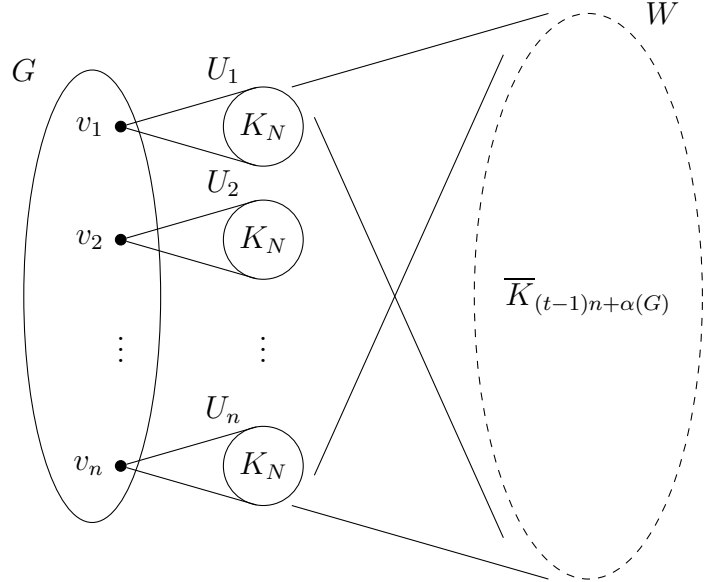
\begin{figure}[H] 
\begin{center}
\begin{tikzpicture}[scale=1.5]
 \tikzstyle{vertex}=[draw,circle,fill=black,minimum size=4,inner sep=0]
 
 \draw (0,0) ellipse (0.6 and 2);
 \node at (-0.6,2) {$G$};
 
 \node[vertex] (v1) at (0.25,1.5) [label=left:{$v_1$}] {};
 \node[vertex] (v2) at (0.25,0.5) [label=left:{$v_2$}] {};
 \node at (0.25,-0.4) {$\vdots$};
 \node[vertex] (vn) at (0.25,-1.5) [label=left:{$v_n$}] {};
 
 \draw (1.5,1.5) ellipse (0.35 and 0.35);
 \draw (1.5,0.5) ellipse (0.35 and 0.35);
 \node at (1.5,-0.4) {$\vdots$};
 \draw (1.5,-1.5) ellipse (0.35 and 0.35);
 
 \node at (1.5,1.5) {$K_N$};
 \node at (1.5,0.5) {$K_N$};
 \node at (1.5,-1.5) {$K_N$};
 
 \node at (1.15,2) {$U_1$};
 \node at (1.15,1) {$U_2$};
 \node at (1.15,-1) {$U_n$};
 
 \draw[dashed] (4.35,0) ellipse (1 and 2.5);
 \node at (5,2.5) {$W$};
 
 \node at (4.35,0) {$\overline{K}_{(t-1)n+\alpha(G)}$};
 
 \begin{scope}[shift={(1.5,1.5)}]
  \draw (v1) -- (100:0.35);
  \draw (v1) -- (260:0.35);
 \end{scope}
 \begin{scope}[shift={(1.5,0.5)}]
  \draw (v2) -- (100:0.35);
  \draw (v2) -- (260:0.35);
 \end{scope}
 \begin{scope}[shift={(1.5,-1.5)}]
  \draw (vn) -- (100:0.35);
  \draw (vn) -- (260:0.35);
 \end{scope}
 
 \draw (1.75,1.85) -- (4,2.5);
 \draw (1.75,-1.85) -- (4,-2.5);
 \draw ($(1.85,1.55)!0.05!(3.8,2.2)$) -- ($(1.85,-1.55)!0.90!(3.8,-2.2)$);
 \draw ($(1.85,1.55)!0.90!(3.8,2.2)$) -- ($(1.85,-1.55)!0.05!(3.8,-2.2)$);
\end{tikzpicture}
\caption{The graph $H$.}
\label{random1}
\end{center}
\end{figure}

\begin{claim}
For each edge $e \in E(G)$, $\tau(H-e) < t$. 
\end{claim} 
\begin{proof}
Since $G$ is $\alpha$-critical, there is an independent set in $H-e$ of size $\alpha(G)+1$  among the vertices of $V$. Let us delete $W$ and all the vertices of $G$ except this independent set. We deleted 
\[ n - \big( \alpha(G)+1 \big) + (t-1)n + \alpha(G)=tn-1 \]
vertices from $H-e$ and the resulting graph has $n$ connected components, thus $H-e$ is not $t$-tough.
\end{proof}

\begin{claim}
$\tau(H) \ge t$.
\end{claim}
\begin{proof}
Suppose to the contrary that there exists a cutset $X \subseteq V(H)$ such that $\omega(H - X) > |X|/t$ and $|X|$ is minimal. For any $i \in [n]$, $U_i \not\subseteq X$, otherwise
\[ \alpha(H) < \frac{N}{t} \le \frac{|X|}{t} < \omega(H - X) \leq \alpha(H) \text{,} \]
which is a contradiction. Since $X$ is minimal, we can assume that for any $i \in [n]$, $U_i \cap X = \emptyset$, since removing only a proper subset of $U_i$ does not disconnect anything from the graph. Thus $W \subseteq X$, otherwise $H - X$ is still connected. Let us denote the number of vertices of $V \cap X$ by $y$. The independence number of the subgraph of $H$ spanned by $V \cup U$ is clearly $n$, thus $\omega(H - X) \leq n$. Which yields 
\[ \frac{y+(t-1)n+ \alpha(G)}{t} = \frac{|X|}{t} < \omega(H - X) \leq n \text{,} \]
so
\[ y+ \alpha(G) < n \text{.} \]

On the other hand, there are at most $\alpha(G)$ components of $\omega(H - X)$ that contain a vertex of $V$, the other connected components are some of the $U_j$, thus $\omega(H - X) \leq y+\alpha(G)$. Which yields 
\[ \frac{y+(t-1)n+ \alpha(G)}{t} = \frac{|X|}{t} < \omega(H - X) \leq y + \alpha(G) \text{,} \]
\[ \frac{(t-1)n}{t} < \left( 1- \frac{1}{t} \right) \big( y + \alpha(G) \big) \text{,} \]
\[ n < y + \alpha(G) \text{,} \]
which is a contradiction.
\end{proof}

\begin{claim}
$\tau(H) \le t$.
\end{claim}
\begin{proof}
Let $I \subseteq V$ be an independent set of size $\alpha(G)$ in the subgraph of $H$ induced by $V$. Let $X:= (V \setminus I) \cup W$. Then 
\[ |X| = n - \alpha(G) + (t-1)n + \alpha(G) = tn \]
and $\omega(H-X) = n$.
\end{proof}

Thus we conclude that $\tau(H) = t$ and the proof is complete.
\end{proof}

\begin{remark}
A different construction can be obtained as follows. Set $N=1$ instead of $N > t \alpha(G)$, and change the size of $W$, but connect every vertex of $W$ to every vertex in $V$, and add a new independent set of vertices $W'$ which is connected only to $W$ as a complete bipartite graph. The fact that we can assume that $G$ is $\alpha$-critical and isolated vertex free, and the appropriate choice of the sizes of $W$ and $W'$ gives an other good construction. For more details see the alternate proof of Theorem $1.1$ in \cite{bms}.
\end{remark}

Note that the construction in Theorem \ref{t>1} cannot be applied for $t<1$ since in general $(t-1)n-\alpha(G)$ could be negative. A more interesting reason why this construction does not work in the case $t<1$ is that it does not reward us with enough components after deleting a vertex. Thus the core idea of the case $t<1$ is that we introduce multiple $U_i$ for each vertex in $V$. 
 
\begin{thm}
For every positive rational number $t < 1$, any graph $G$ can be embedded as an induced subgraph into a minimally $t$-tough graph.
\label{t<1}
\end{thm}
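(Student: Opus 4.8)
The plan is to imitate the construction of Theorem~\ref{t>1}, but to follow the hint that each vertex of $V$ should now carry \emph{several} attached cliques, so that deleting a single vertex of $V$ breaks off many components and the toughness can be driven below $1$. Write $t=a/b$ with $a<b$. By Lemma~\ref{alphacritical} it suffices to embed an $\alpha$-critical graph $G$, and by Lemma~\ref{blowup} I am free to enlarge the order $n$ (and hence $n-\alpha(G)$) by blowing up vertices while keeping $\alpha(G)$ fixed and $\alpha$-criticality intact. Set $V=\{v_1,\dots,v_n\}$ carrying $G$, and for each $i\in[n]$ introduce $m$ disjoint cliques $U_{i,1},\dots,U_{i,m}$, each isomorphic to $K_N$; join every vertex of each $U_{i,j}$ both to $v_i$ and to every vertex of an independent set $W$. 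Here $m$, $|W|$ and $N$ are parameters to be fixed, and exactly as in Theorem~\ref{t>1} I take $N>t\,\alpha(H)$, so that $\alpha(H)$ is independent of $N$ and no minimal cutset can contain a whole clique.

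The intended tough set is $X=(V\setminus I)\cup W$, where $I$ is a maximum independent set of $G$: after deleting $X$ each $v_i\in I$ stays joined to its $m$ cliques as one component, while the $m$ cliques of every deleted $v_i$ split off as separate components, giving $\alpha(G)+m\big(n-\alpha(G)\big)$ components against a cutset of size $\big(n-\alpha(G)\big)+|W|$. Forcing this ratio to equal $t$ determines $|W|$ and already yields $\tau(H)\le t$.

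The reverse inequality $\tau(H)\ge t$ is the step I expect to be the main obstacle, and I would handle it as in the claims proving Theorem~\ref{t>1}. A minimal cutset $X$ meets no clique (otherwise $|X|\ge N$ forces the number of components to exceed $N/t>\alpha(H)$, which is impossible), and it must contain all of $W$, since a single surviving vertex of $W$ reconnects every remaining clique, hence every surviving $v_i$, into one component. Thus $X=T\cup W$ with $T\subseteq V$, the number of components is $m|T|+\omega\big(G[V\setminus T]\big)$, and bounding $\omega\big(G[V\setminus T]\big)$ by $\min\big(|V\setminus T|,\alpha(G)\big)$ shows the ratio $|X|/\omega(H-X)$ is minimized precisely when $V\setminus T$ is a maximum independent set, recovering $t$. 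Making this rigorous imposes two inequalities on the parameters, essentially $m|W|\ge\alpha(G)$ and $n>(m-1)|W|$: the former rules out deleting fewer than $n-\alpha(G)$ vertices of $V$, the latter provides strict monotonicity.

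Criticality of the edges of $G$ is then easy: for $e\in E(G)$ one has $\alpha(G-e)=\alpha(G)+1$, so in $H-e$ the analogous set $(V\setminus I')\cup W$ with $|I'|=\alpha(G)+1$ is a cutset of ratio strictly below $t$ (this is exactly where $n>(m-1)|W|$ is used), whence $\tau(H-e)<t$. Removing non-critical edges of $H$ one at a time then produces a minimally $t$-tough graph in which every edge of $G$, and therefore the originally embedded graph, survives as an induced subgraph. The genuinely delicate point is arithmetic: one must pick the unique integer $m\ge 2$ with $1/m\le t<1/(m-1)$ and then exploit the blow-up freedom (and, if the parity or residue of $\alpha(G)$ is inconvenient, a disjoint union with a small $\alpha$-critical graph) to make $|W|$ a non-negative integer satisfying both parameter inequalities simultaneously for \emph{every} rational $t<1$.
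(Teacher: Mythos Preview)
Your approach is correct and genuinely different from the paper's. The paper does \emph{not} keep the vertices of $G$ as single vertices: it blows each $v_i$ up into a clique $V_i$ of size exactly $a$, attaches exactly $b$ large cliques $U_{i,1},\dots,U_{i,b}$ to each $V_i$, takes $|W|=a\,\alpha(G)$, and adds a further independent set $W'$ of size $(b-1)\alpha(G)$ joined completely to $W$. With these choices the arithmetic becomes clean: the intended tough set $X=\big(\bigcup_{v_{i,1}\notin I}V_i\big)\cup W$ has size $an$ and leaves $bn$ components, and the lower bound $\tau(H)\ge a/b$ reduces to the single line $b\big(ay+a\alpha(G)\big)<a\big(by+b\alpha(G)\big)$, an immediate contradiction. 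No residue conditions or auxiliary inequalities are needed.

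Your construction keeps $V$ as single vertices, drops $W'$, and lets $m=\lceil 1/t\rceil$ and $|W|$ be free parameters fixed by the toughness equation. This is structurally simpler (one fewer layer) but pushes all the difficulty into arithmetic: you must adjust $n$ and $\alpha(G)$ via blow-ups and disjoint unions to make $|W|$ an integer, and you must check the side conditions $m|W|\ge\alpha(G)$ and $n>(m-1)|W|$. (In fact both follow automatically from your choice $1/m\le t<1/(m-1)$ together with the defining formula for $|W|$, so they are not genuine extra constraints; the only real work is integrality.) The trade-off is that the paper's extra gadgets $V_i$ and $W'$ buy a uniform construction with no case analysis, while your version shows that the pendant set $W'$ is dispensable at the price of a more delicate parameter search.
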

\begin{proof}
 Let $n$ denote the number of vertices in $G$ and let $a,b \in \mathbb{N}$ be such that $t=a/b$. Similarly to Theorem \ref{t>1}, we can assume that $G$ is $\alpha$-critical. We will embed $G$ as an induced subgraph into $H$ such that $\tau(H) = a/b$, and $\tau(H-e) < a/b$ for every edge $e$ of $G$. The construction is similar as in Theorem \ref{t>1}, the same letters denote similar vertices. Let $H$ be defined as follows. Let $N$ be a large integer to be specified later. For each $i \in [n]$ and $j \in [b]$, let
 \begin{gather*}
  V_i := \{ v_{i,1}, v_{i,2}, \ldots, v_{i,a} \}\text{,} \qquad V := \bigcup_{i=1}^n V_i \text{,} \\
  U_{i,j} := \{ u_{i,1}, \ldots, u_{i,N} \} \text{,} \qquad U := \bigcup _{j=1}^{b} \bigcup_{i=1}^n U_{i,j} \text{,} \\
  W := \{ w_1, \ldots, w_{a \cdot \alpha(G)} \} \text{,} \qquad W' := \{ w'_1, \ldots, w'_{(b-1) \cdot \alpha(G)} \} \text{,} \\
  V(H) := V \cup U \cup W \cup W'\text{.}
 \end{gather*}
Place the graph $G$ on the vertices $v_{1,1}, \ldots, v_{n,1}$ and for each $i\in [n]$ place a clique on $V_i$. For each $i \in [n]$,  $j \in [b]$ put a clique on $U_{i,j}$ and connect every vertex in $U_{i,j}$ to every vertex in $W$ and $V_i$. Connect every vertex in $W$ to every vertex in $W'$. Observe that $\alpha(H)$ does not depend on $N$, so let $N > t \alpha(H)$. See Figure~\ref{random2}.

\begin{figure}[H]
\begin{center}
\begin{tikzpicture}[scale=1.5]
 \tikzstyle{vertex}=[draw,circle,fill=black,minimum size=4,inner sep=0]
 
 \draw (-0.75,0) ellipse (0.65 and 3);
 \node at (-1.35,3) {$G$};
 
 \node[vertex] (v1) at (-0.5,2.25) [label=left:{$v_{1,1}$}] {};
 \node[vertex] (v2) at (-0.5,0.75) [label=left:{$v_{2,1}$}] {};
 \node at (-0.5,-0.65) {$\vdots$};
 \node[vertex] (vn) at (-0.5,-2.25) [label=left:{$v_{n,1}$}] {};
 
 \draw (-0.15,2.25) ellipse (0.5 and 0.35);
 \draw (-0.15,0.75) ellipse (0.5 and 0.35);
 \draw (-0.15,-2.25) ellipse (0.5 and 0.35);
 
 \node at (0.1,2.75) {$V_1$};
 \node at (0.1,1.25) {$V_2$};
 \node at (0.1,-1.75) {$V_n$};
 
 \node at (0.1,2.25) {$K_a$};
 \node at (0.1,0.75) {$K_a$};
 \node at (0.1,-2.25) {$K_a$};
 
 \draw (1.5,2.65) ellipse (0.25 and 0.25);
 \node at (1.5,2.32) {\footnotesize{$\vdots$}};
 \draw (1.5,1.85) ellipse (0.25 and 0.25);
 \draw (1.5,1.15) ellipse (0.25 and 0.25);
 \node at (1.5,0.82) {\footnotesize{$\vdots$}};
 \draw (1.5,0.35) ellipse (0.25 and 0.25);
 \node at (1.5,-0.65) {$\vdots$};
 \draw (1.5,-1.85) ellipse (0.25 and 0.25);
 \node at (1.5,-2.18) {\footnotesize{$\vdots$}};
 \draw (1.5,-2.65) ellipse (0.25 and 0.25);
 
 \node at (1.5,2.65) {\footnotesize{$K_N$}};
 \node at (1.5,1.85) {\footnotesize{$K_N$}};
 \node at (1.5,1.15) {\footnotesize{$K_N$}};
 \node at (1.5,0.35) {\footnotesize{$K_N$}};
 \node at (1.5,-1.85) {\footnotesize{$K_N$}};
 \node at (1.5,-2.65) {\footnotesize{$K_N$}};
 
 \node at (1.8,3) {\footnotesize{$U_{1,1}$}};
 \node at (1.8,2.15) {\footnotesize{$U_{1,b}$}};
 \node at (1.8,1.45) {\footnotesize{$U_{2,1}$}};
 \node at (1.8,0.65) {\footnotesize{$U_{2,b}$}};
 \node at (1.8,-1.55) {\footnotesize{$U_{n,1}$}};
 \node at (1.8,-3.05) {\footnotesize{$U_{n,b}$}};
 
 \draw (0.3,2.6) -- (1.3,2.9);
 \draw (0.3,1.9) -- (1.3,1.6);
 \draw ($(0.3,2.6)!0.075!(1.3,1.6)$) -- ($(0.3,2.6)!0.925!(1.3,1.6)$);
 \draw ($(0.3,1.9)!0.075!(1.3,2.9)$) -- ($(0.3,1.9)!0.925!(1.3,2.9)$);
 
 \draw (0.3,1.1) -- (1.3,1.4);
 \draw (0.3,0.4) -- (1.3,0.1);
 \draw ($(0.3,1.1)!0.075!(1.3,0.1)$) -- ($(0.3,1.1)!0.925!(1.3,0.1)$);
 \draw ($(0.3,0.4)!0.075!(1.3,1.4)$) -- ($(0.3,0.4)!0.925!(1.3,1.4)$);
 
 \draw (0.3,-1.9) -- (1.3,-1.6);
 \draw (0.3,-2.6) -- (1.3,-2.9);
 \draw ($(0.3,-1.9)!0.075!(1.3,-2.9)$) -- ($(0.3,-1.9)!0.925!(1.3,-2.9)$);
 \draw ($(0.3,-2.6)!0.075!(1.3,-1.6)$) -- ($(0.3,-2.6)!0.925!(1.3,-1.6)$);
 
 \draw[dashed] (4,0) ellipse (0.75 and 2.5);
 \node at (4.4,2.75) {$W$};
 
 \draw[dashed] (6.75,0) ellipse (0.75 and 2.75);
 \node at (7.25,2.9) {$W'$};
 
 \node at (4,0) {$\overline{K}_{a \cdot \alpha(G)}$};
 \node at (6.75,0) {$\overline{K}_{(b-1) \cdot \alpha(G)}$};
 
 \draw (1.75,2.85) -- (3.65,2.5);
 \draw (1.75,-2.85) -- (3.65,-2.5);
 \draw ($(1.8,2.85)!0.05!(3.6,-2.5)$) -- ($(1.8,2.5)!0.95!(3.6,-2.5)$);
 \draw ($(1.8,-2.85)!0.05!(3.6,2.5)$) -- ($(1.8,-2.5)!0.95!(3.6,2.5)$);
 
 \draw (4.4,2.5) -- (6.35,2.7);
 \draw (4.4,-2.5) -- (6.35,-2.7);
 \draw ($(4.5,2.5)!0.05!(6.25,-2.75)$) -- ($(4.5,2.5)!0.95!(6.25,-2.75)$);
 \draw ($(4.5,-2.5)!0.05!(6.25,2.75)$) -- ($(4.5,-2.5)!0.95!(6.25,2.75)$);
\end{tikzpicture}
\caption{The graph $H$.}
\label{random2}
\end{center}
\end{figure}
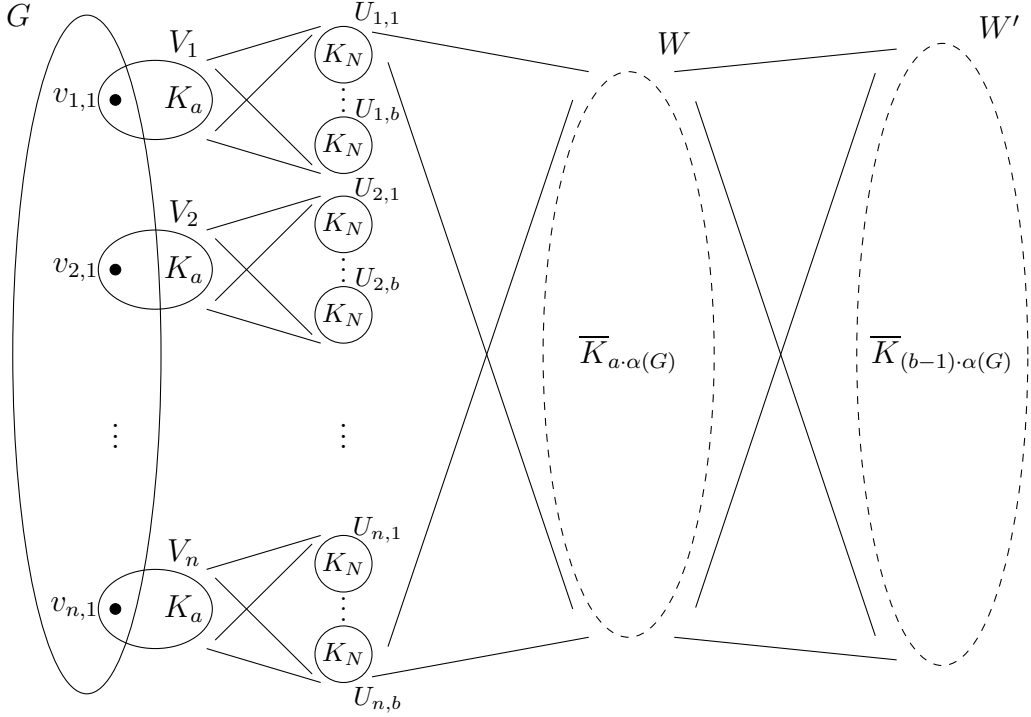

\begin{claim}
For each edge $e \in E(G)$, $\tau(H-e) < a/b$.
\end{claim}
\begin{proof}
By the $\alpha$-criticality of $G$, the graph $G - e$ has an independent set $I \subseteq V(G)$ of size $\alpha(G)+1$. Let us delete the vertices of $W$ and for every $i \in [n]$, if $v_i \notin I$ then let us also delete $V_i$. We deleted exactly 
\[ \big( n-(\alpha(G)+1) \big)a+ \alpha(G) \cdot a= (n-1)a \]
vertices. After the deletion, the vertices of $I$ are in different connected components and the vertices of $W'$ are isolated. There are $\big( n-(\alpha(G)+1) \big)b$ more connected components not containing any vertex from $G$ but containing vertices from $U$. Thus the resulting graph has 
\[ \big( \alpha(G)+1 \big) + (b-1) \cdot \alpha(G) + \big( n-(\alpha(G)+1) \big)b= (n-1)b+1 \]
connected components which is one more than an $a/b$-tough graph could have. 
\end{proof}

\begin{claim}
 $\tau(H) \ge a/b$. 
\end{claim}
\begin{proof}
Suppose to the contrary that there exists a set of vertices $X \subseteq V(H)$ such that $\omega(H - X) > |X|/t \text{.}$
First we show that some convenient assumptions can be made for $X$. 

\begin{lemma} We can assume that $X$ has the following properties.
\begin{enumerate}
 \item[(1)] If some vertices have the same closed (or open) neighborhood, then either all or none of them are in $X$.
 \item[(2)] For every $i \in [n]$ and $j \in [b]$, $U_{i,j} \cap X = \emptyset$.
 \item[(3)] $W \subseteq X$.
 \item[(4)] For all $i \in [n]$, either $V_i \subseteq X$ or $V_i \cap X = \emptyset$.
 \item[(5)] $W' \cap X = \emptyset$.
\end{enumerate}
\end{lemma}
\begin{proof} $\empty$
\begin{enumerate}
 \item[(1)] Removing only a proper subset of such a vertex set does not disconnect anything from the graph.
 \item[(2)] Since for all $i \in [n]$ and $j \in [b]$ the closed neighborhoods of the vertices of $U_{i,j}$ are identical, by (1) either $U_{i,j} \subseteq X$ or $U_{i,j} \cap X = \emptyset$. But by the choice of $N$, $U_{i,j} \not\subseteq X$, otherwise
 \[ \alpha(H) \leq \frac{N}{t} \leq \frac{|X|}{t} < \omega(H-X) \leq \alpha(H) \text{,} \]
 which is a contradiction. 
 \item[(3)] Otherwise by (2) $H-X$ would be connected. 
 \item[(4)] For all $i \in [n]$, the closed neighborhood of $v_{i,1}$ is larger than the closed neighborhood of the vertices $v_{i,2}, \ldots, v_{i,n}$, so if $v_{i,1} \not\in X$, then we can assume that $V_i \cap X = \emptyset$. If $v_{i,1} \in X$, then we can assume that $V_i \subseteq X$, since adding the rest of $V_i$ to $X$ would increase the size of $X$ by at most $a-1$ and the number of the components by exactly $b-1$. This preserves the property that $\omega(H-X) > |X|/t$, as $a < b$.
 \item[(5)] It is a trivial consequence of (3). 
\end{enumerate}
\end{proof}

Let $y$ be the number of the sets $V_i$ that are subsets of $X$. There are at most $\alpha(G)$ components of $H-X$ that contain a vertex from $V$, the other connected components are some of the components of $U$ and every vertex of $W'$. Thus $\omega(H-X) \leq \alpha(G) + by +(b-1) \cdot \alpha(G).$ By our assumption that $|X|/t < \omega(H - X)$ this yields 
\[ \frac{ay+a\alpha(G)}{t}=\frac{|X|}{t}<\omega(H - X) \leq \alpha(G) + by +(b-1) \cdot \alpha(G) \text{.} \]
Since $t=a/b$,
\[ b \big( ay+a\alpha(G) \big)< a \big( by+b\alpha(G) \big) \text{,} \]
which is a contradiction.
\end{proof}

\begin{claim}
 $\tau(H) \le a/b$.
\end{claim}
\begin{proof}
Let $I \subseteq V(G) = \{ v_{1,1}, \ldots, v_{n,1} \}$ be any independent set of size $\alpha(G)$ and consider the set 
\[ X = \left( \bigcup \{ V_i \mid v_{1,i} \notin I \} \right) \cup W \text{.} \]
Since 
\[ |X| = a \big( n-\alpha(G) \big) + a\alpha(G) = an \]
and $H-X$ has exactly
\[ \alpha(G)+ b\big(n-\alpha(G)\big)+(b-1) \cdot \alpha(G) = bn \]
connected components.
\end{proof}

So $\tau (H) =a/b$ and the proof is complete. 
\end{proof}

\section*{Acknowledgement}
We thank the anonymous reviewers for their careful reading of our manuscript and their many insightful comments and suggestions. All authors are partially supported by the grant of the National Research, Development and Innovation Office – NKFIH, No. 108947. The first author is also supported by the National Research, Development and Innovation Office – NKFIH, No. 116769. The second author is also supported by the National Research, Development and Innovation Office – NKFIH, No. 120706.

\begin{bibdiv}
\begin{biblist}
\bib{9/4}{article}{
 title={Not every 2-tough graph is Hamiltonian},
 author={D. Bauer},
 author={H. J. Broersma},
 author={H. J. Veldman},
 journal={Discrete Applied Mathematics},
 volume={99},
 pages={317--321},
 date={2000},
}

\bib{bms}{article}{
 title={On the complexity of recognizing tough graphs},
 author={D. Bauer},
 author={A. Morgana},
 author={E. Schmeichel},
 journal={Discrete Mathematics},
 volume={124},
 pages={13--17},
 date={1994},
}

\bib{chvatal}{article}{
 title={Tough graphs and hamiltonian circuits},
 author={V. Chv\'{a}tal},
 journal={Discrete Mathematics},
 volume={5},
 pages={215--228},
 date={1973},
}

\bib{shortcut}{article}{
 title={A remark on Hamiltonian cycles},
 author={R. H\"{a}ggkvist},
 author={G. G. Nicoghossian},
 journal={Journal of Combinatorial Theory, Series B},
 volume={30},
 pages={118--120},
 date={1981},
}

\bib{kriesell}{misc}{
 title={Problems from the workshop on dominating cycles, \url{http://iti.zcu.cz/history/2003/Hajek/problems/hajek-problems.ps}},
 author={T. Kaiser},
}

\bib{lovasz}{book}{
 title={Combinatorial problems and exercises},
 author={L. Lovász},
 date={2007},
 publisher={AMS Chelsea Publishing},
 address={Providence, Rhode Island},
}

\bib{ende}{article}{
 title={Eine Eigenschaft der Atome endlicher Graphen},
 author={W. Mader},
 journal={Archiv der Mathematik},
 volume={22},
 pages={333--336},
 date={1971},
}

\bib{claw}{article}{
 title={Hamiltonian results in $K_{1,3}$-free graphs},
 author={M. M. Matthews},
 author={D. P. Sumner},
 journal={Journal of Graph Theory},
 volume={8},
 pages={139--146},
 date={1984},
}
\end{biblist}
\end{bibdiv}

\end{document}